\documentclass{article}
\usepackage[T1]{fontenc}
\usepackage[utf8]{inputenc}
\usepackage{times}
\usepackage[margin=1in]{geometry}
\usepackage{amsmath,amssymb,amsthm,mathtools,bm}
\usepackage{graphicx,booktabs,placeins,float}
\usepackage{microtype}
\usepackage[authoryear,round]{natbib}
\usepackage{hyperref}
\usepackage{url}
\hypersetup{hidelinks,pdftitle={Boundary Geometry and Global Rates of Cyclic Coordinate Descent for SVM Duals},pdfauthor={Egor Gladin and Eva Ibodova}}

\newcommand{\R}{\mathbb{R}}
\newcommand{\cB}{\mathcal{B}}
\DeclareMathOperator{\diag}{diag}
\DeclareMathOperator*{\argmin}{arg\,min}
\DeclareMathOperator*{\argmax}{arg\,max}
\newtheorem{theorem}{Theorem}
\newtheorem{lemma}[theorem]{Lemma}
\newtheorem{proposition}[theorem]{Proposition}

\title{Boundary Geometry and Global Rates of Cyclic Coordinate Descent for SVM Duals}
\author{Egor Gladin\\HSE University \and Eva Ibodova\\HSE University}
\date{}
\begin{document}
\maketitle

\begin{abstract}
Cyclic coordinate descent is widely used to train support vector machines, but global linear convergence alone gives limited guidance about the rate on a particular instance.
We study exact cyclic coordinate minimization for box-constrained convex quadratics and develop global contraction bounds that capture both coordinate order and boundary geometry.
Our convergence certificate combines two ingredients: interactions between successive coordinate updates, encoded by the triangular part of the normalized Hessian, and objective growth, captured by a matrix lower bound on the optimality gap.
It remains valid for cases with singular Hessians, nonunique minimizers, and updates clipped at the boundary.
An error-bound specialization of our analysis strictly sharpens the Wang--Lin factor when both use the same error-bound constant.
In the positive-definite case, the analysis shows that the worst-case contraction of one Gauss--Seidel sweep remains a global upper bound under box constraints, with equality for an interior optimizer.
We also demonstrate the limitations of Hessian-only bounds by constructing a two-coordinate family with a fixed singular normalized Hessian, whose worst-case contraction approaches one as its boundary margin vanishes.
The theory is complemented by practical procedures for bounding the worst-case contraction on individual instances and numerical evaluation on several classical SVM datasets.
\end{abstract}

\section{Introduction}

Box-constrained convex quadratics arise when a quadratic model is combined
with independent lower and upper bounds. A prominent example is the dual of a
support vector machine (SVM), for which cyclic coordinate minimization is a
standard training method \citep{hsieh2008}. Each update solves a scalar
quadratic and clips its minimizer to an interval. Without constraints, exact
cyclic coordinate minimization is the classical Gauss--Seidel iteration
\citep{varga2000,leewright2019}.

Worst-case rates over problem classes support algorithm comparisons
\citep{nemirovskiyudin1983,droriteboulle2014,taylorhendrickxglineur2017},
but need not predict behavior on a fixed quadratic instance.
For a fixed cyclic order, \citet{wanglin2014} prove global linear convergence
for the SVM dual even when its Hessian is singular, and give an explicit
contraction factor: a number $q<1$ such that the objective gap after a full
sweep is at most $q$ times the gap before the sweep. Their analysis uses a
residual error bound: for some $\kappa>0$, every feasible $x$ satisfies
$\operatorname{dist}(x,X_*)\leq\kappa\|R(x)\|$, where $X_*$ is the solution
set and $R(x)$ is a projected-gradient residual. The resulting contraction
factor is explicit in $\kappa$, but general scalar inequalities make it highly
conservative, and the analysis provides no practical way to evaluate
$\kappa$ on a given dataset. A useful instance-specific bound must account for
both the interactions accumulated during a sweep and how sharply the objective
rises as feasible points move away from the solution set.

The boundary is central to the second issue. At an optimal endpoint with a
nonzero partial derivative, moving a distance $t$ into the box incurs a
first-order penalty. Since $t$ is bounded by the box width, this penalty also
gives a quadratic lower bound, even in directions where a singular Hessian
has no curvature. As the endpoint margin vanishes, these directions can
become arbitrarily slow. Such growth is invisible to the positive eigenvalues
of the normalized Hessian alone. Globally, \emph{quadratic growth} means that
$f(x)-f_*\geq(\nu/2)\operatorname{dist}(x,X_*)^2$ for some $\nu>0$.

We develop an instance-specific analysis around this observation. A matrix
version of the growth bound retains its directional structure. The strictly
triangular part of the normalized Hessian describes the effect of updates made
later in a sweep. Combining the two matrices through a spectral quantity yields
what we call a \emph{spectral certificate} for global contraction; it retains
directional information that separate scalar bounds would lose.
Safe screening uses certified bounds on an unknown optimizer to prove that
selected coordinates must lie at box endpoints and can therefore be removed
from an optimization problem. We instead retain the quantitative slack in
these certificates. The resulting lower bounds on nonzero optimal derivatives
supply a computable growth matrix, including in some singular problems. The
spectral certificate concerns the original cyclic method, from its first
sweep. Screening supplies information for the analysis while the algorithm
keeps every coordinate and its original updates.

Our contributions have three parts.
\begin{itemize}
\item We derive the spectral certificate for exact cyclic minimization of
box-constrained convex quadratics
(Theorem~\ref{thm:matrix}). It allows singular Hessians, nonunique minimizers,
and clipped updates. Certified endpoint margins give a concrete instance of
the bound (Section~\ref{sec:boundary}). Its error-bound specialization
(Proposition~\ref{prop:error-bound}) strictly improves the factor obtained
from Theorem~8 and Lemma~10 of \citet{wanglin2014}, using the same error-bound
constant.
\item In the positive-definite case, we show that the exact one-sweep
objective contraction factor for unconstrained quadratic minimization---the
Gauss--Seidel factor---remains a global upper bound after box constraints are
imposed, and is \emph{attained} when the box-constrained problem has an
interior optimizer (Theorem~\ref{thm:pd}).
\item We construct a two-coordinate SVM family with a fixed singular
normalized Hessian and compute its exact worst-case contraction
(Theorem~\ref{thm:singular}). A vanishing boundary margin makes this factor
approach one and produces a trajectory requiring order $1/\delta$ sweeps
to halve its initial gap, where $\delta$ is the margin. Our spectral
certificate recovers the exact leading constant in the decrease fraction.
\end{itemize}

\subsection{Related work}

\paragraph{Convergence and cyclic order.}
Error bounds have long supported linear-convergence analyses beyond strongly
convex objectives \citep{luotseng1992,tseng2010}. \citet{wanglin2014}
establish global linear convergence for feasible descent methods and
explicitly cover exact cyclic CD on the SVM dual considered here. Their proof
gives an explicit factor in terms of descent and error-bound constants;
general scalar inequalities make the factor conservative, while evaluating
the error-bound constant on concrete data remains a separate task. Our
spectral certificate targets this quantitative gap by retaining matrix
structure and deriving computable geometric information from screening.
For strongly convex problems, \citet{li2018} sharpen cyclic block coordinate
complexity bounds using triangular truncation estimates.
\citet{sunye2021} show that cyclic CD can be much slower than randomized CD
on unconstrained quadratics. Their Claim~B.1 gives the contraction factor for
cyclic CD on a positive-definite quadratic without constraints, which arises
as a special case of our box-constrained analysis. Our contribution combines
this triangular matrix analysis with
boundary-induced growth under box constraints. Recent performance
estimation work by \citet{kamri2026} studies worst-case guarantees over
classes of unconstrained coordinate-wise smooth functions. Our bounds
instead use a specified quadratic, box, and coordinate order.

\paragraph{Growth, screening, and identification.}
For convex composite objectives, \citet{drusvyatskiylewis2018} show that
quadratic growth is equivalent, up to explicit constants, to an error bound in
which the proximal-gradient step length controls the distance to the solution
set; their statements hold on corresponding objective sublevel sets.
Safe SVM screening was developed by \citet{ogawa2013,ogawa2014}, and
\citet{zimmert2015} give dynamic screening based on a duality gap.
The Gap Safe framework uses gap-controlled regions to certify inactive
variables \citep{ndiaye2017}. We use such regions to retain \emph{lower
bounds on nonzero optimal derivatives}, and convert these margins into a
global growth matrix. This role differs from removing variables or
analyzing the iteration after identification. In particular,
\citet{klopfenstein2020} prove finite identification and local linear rates
for cyclic CD under regularity assumptions, while \citet{nutini2022}
quantify active-set identification complexity, including its dependence on
nondegeneracy margins. Our singular example isolates that dependence in an
exact global one-sweep factor and an explicit full trajectory.

\paragraph{Notation.}
We use $\|\cdot\|$ for the Euclidean norm and its induced matrix norm, and
$\lambda_{\min}$ and $\lambda_{\max}$ for the extreme eigenvalues of a
symmetric matrix. The notation $S\succeq0$ (respectively $S\succ0$) means
that $S$ is positive semidefinite (respectively positive definite), while
$S\succeq M$ means $S-M\succeq0$. For a nonempty closed convex set $P$,
$\operatorname{dist}(x,P):=\min_{z\in P}\|x-z\|$ is the distance to $P$,
and $\Pi_P$ denotes the Euclidean projection onto $P$. The matrix $I$
denotes the identity, $\diag$ forms a diagonal matrix, and
$(t)_+:=\max\{t,0\}$ denotes the positive part of $t$.

\section{Problem and cyclic updates}
\label{sec:problem}

We work with the diagonally normalized box quadratic
\begin{equation}
 \min_{x\in\cB} f(x),\qquad
 f(x)=\tfrac12x^\top Hx-a^\top x,\qquad
 \cB=\prod_{i=1}^p[0,r_i],
 \label{eq:problem}
\end{equation}
where $p\geq1$, $r_i>0$, $a\in\R^p$, and $H\in\R^{p\times p}$ is
symmetric positive semidefinite with $H_{ii}=1$.
For an SVM with hinge loss and no unregularized intercept, the original
dual takes the box-constrained form \citep{hsieh2008,wanglin2014}
$$
\min_{\alpha\in[0,C]^p}
\frac12\alpha^\top Q\alpha-\bm1^\top\alpha.
$$
Diagonal scaling gives \eqref{eq:problem}, preserves objective values and
exact coordinate updates, and yields $a_i r_i=C$; see
Appendix~\ref{app:scaling}. The analysis applies more generally to arbitrary
$a$ and positive side lengths. The coupled equality constraint introduced by
an unregularized intercept lies outside this setting.

Write $g(x)=Hx-a$ for the gradient, $X_*=\argmin_{x\in\cB}f(x)$ for the
solution set, $f_*=\min_{x\in\cB}f(x)$, and $\Delta(x)=f(x)-f_*$.
The set $X_*$ is nonempty, compact, and convex.
The unit-step projected-gradient residual is
\begin{equation}
 R(x):=x-\Pi_{\cB}(x-g(x)).
 \label{eq:residual}
\end{equation}

Fix the coordinate order $1,\ldots,p$. Starting at $x^0=x$, update
coordinate $i$ to obtain $x^i$, leaving all other coordinates unchanged:
\begin{equation}
 x_i^i=\Pi_{[0,r_i]}\bigl(x_i^{i-1}-g_i(x^{i-1})\bigr),
 \qquad i=1,\ldots,p.
 \label{eq:update}
\end{equation}
For an interval, $\Pi_{[0,r_i]}(t)=\min\{r_i,\max\{0,t\}\}$ is scalar
clipping.
Unit diagonal makes \eqref{eq:update} the exact scalar minimizer.
Denote one complete sweep by $\mathcal C(x)=x^p$. Our object of study is
the worst-case \emph{one-sweep objective contraction}
\begin{equation}
 \chi_*:=\sup_{x\in\cB\setminus X_*}
       \frac{\Delta(\mathcal C(x))}{\Delta(x)}.
 \label{eq:chi}
\end{equation}
A bound $\chi_*\leq q<1$ gives
$\Delta(\mathcal C^k(x))\leq q^k\Delta(x)$ for every starting point and
every integer $k\geq0$. A particular trajectory may have a smaller
asymptotic rate.

Split the Hessian according to the chosen order:
\begin{equation}
 H=I+T+T^\top,\qquad
 T_{ij}=\begin{cases}H_{ij},&i>j,\\0,&i\leq j.\end{cases}
 \label{eq:split}
\end{equation}
Changing the order permutes the problem data before this split. The matrix
$T^\top$ records how later updates change partial derivatives that were
already minimized.

\section{A spectral certificate}
\label{sec:matrix}

For $x\in\cB$, let $\bar x=\Pi_{X_*}(x)$ be its nearest optimizer in Euclidean distance.
Suppose a symmetric matrix $M\succeq0$ satisfies
\begin{equation}
 \Delta(x)\geq\tfrac12(x-\bar x)^\top M(x-\bar x)
 \qquad (x\in\cB).
 \label{eq:growth}
\end{equation}
We call any such $M$ a \emph{growth matrix}. This condition describes growth
away from the solution set and
permits nonunique minimizers even when $M\succ0$. For example, quadratic growth
with constant $\nu>0$ is \eqref{eq:growth} with $M=\nu I$.
Section~\ref{sec:boundary} constructs a matrix directly from boundary
margins.

\begin{theorem}
\label{thm:matrix}
Suppose \eqref{eq:growth} holds and $H+M\succ0$. For $0<\theta<1$, set
\begin{equation}
 A_\theta(M):=
 \frac{\lambda_{\max}\!\left(T(H+\theta M)^{-1}T^\top\right)}{1-\theta}.
 \label{eq:A}
\end{equation}
Then $H+\theta M\succ0$ and, for every $x\in\cB$,
\begin{align}
 \Delta(\mathcal C(x))
 &\leq\tfrac12 A_\theta(M)\|\mathcal C(x)-x\|^2,
 \label{eq:gap-step}\\
 \Delta(\mathcal C(x))
 &\leq\frac{A_\theta(M)}{1+A_\theta(M)}\Delta(x).
 \label{eq:rate}
\end{align}
\end{theorem}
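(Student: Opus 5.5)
The plan is to compare the post-sweep point $y:=\mathcal C(x)$ with its nearest optimizer, using a single inequality that mixes the per-coordinate optimality conditions with the triangular coupling $T^\top$. Then I would close it with a sufficient-decrease estimate for the sweep. Write $d:=y-x$, $\bar y:=\Pi_{X_*}(y)$ and $e:=y-\bar y$.

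The first claim is immediate. Since $H\succeq0$ and $0<\theta<1$, we have $H+\theta M\succeq\theta H+\theta M=\theta(H+M)\succ0$.

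\emph{Step 1 (gradient at the end of the sweep).} After update $i$, the point $x^i$ agrees with $y$ in coordinates $1,\ldots,i$. It differs from $y$ only in coordinates $j>i$, where $y_j-x^i_j=d_j$. Because $H$ is quadratic,
\[
g_i(y)=g_i(x^i)+\sum_{j>i}H_{ij}d_j=g_i(x^i)+(T^\top d)_i .
\]
Set $v_i:=g_i(x^i)$. The clipped update \eqref{eq:update} is the exact scalar minimizer over $[0,r_i]$, so $v_i(z_i-y_i)\ge0$ for every $z_i\in[0,r_i]$. Hence $g(y)=v+T^\top d$, where $v^\top(z-y)\ge0$ for all $z\in\cB$.

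\emph{Step 2 (upper bound on the gap).} The quadratic identity $f(\bar y)=f(y)-g(y)^\top e+\tfrac12 e^\top He$ gives
\[
\Delta(y)=g(y)^\top e-\tfrac12e^\top He\le d^\top Te-\tfrac12 e^\top He ,
\]
where the inequality uses Step~1 with $z=\bar y$. Next I add $\theta$ times the growth inequality \eqref{eq:growth} at $y$, namely $0\le\theta\Delta(y)-\tfrac\theta2e^\top Me$. This yields
\[
(1-\theta)\Delta(y)\le d^\top Te-\tfrac12e^\top(H+\theta M)e .
\]
Maximizing the right-hand side over $e$ bounds it by $\tfrac12 d^\top T(H+\theta M)^{-1}T^\top d$, which is at most $\tfrac12\lambda_{\max}(\cdot)\|d\|^2$. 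Dividing by $1-\theta$ gives \eqref{eq:gap-step}.

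\emph{Step 3 (sufficient decrease).} Each update minimizes a scalar quadratic with curvature $H_{ii}=1$ over an interval. By the scalar optimality condition, $f(x^{i-1})-f(x^i)\ge\tfrac12(x^i_i-x^{i-1}_i)^2=\tfrac12d_i^2$. Summing over $i$ gives $\Delta(x)-\Delta(y)\ge\tfrac12\|d\|^2$. Combining this with \eqref{eq:gap-step} gives $\Delta(y)\le A_\theta(M)\bigl(\Delta(x)-\Delta(y)\bigr)$, and rearranging yields \eqref{eq:rate}.

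I expect the main obstacle to be Step~1: the gradient at $y$ must be split into a part satisfying the box variational inequality and a strictly triangular correction. The key point is that the optimality of $x^i_i$ is measured at $x^i$ and not at $y$. The discrepancy comes only from later coordinates, and it is exactly $(T^\top d)_i$. Everything else is a completion of squares.
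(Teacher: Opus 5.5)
Your proposal is correct and is essentially the paper's own proof. The same ingredients appear: the chronology identity $g_i(y)=g_i(x^i)+(T^\top d)_i$, the coordinate-optimality sign $g_i(x^i)e_i\le 0$ tested at $\bar y$, adding $\theta$ times the growth bound at $y$, completing the square with $H+\theta M$, and the per-coordinate decrease $f(x)-f(y)\ge\tfrac12\|d\|^2$. The only differences are the order of the steps and your one-line check that $H+\theta M\succ0$.
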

\noindent\emph{Proof sketch.} The full proof appears in
Appendix~\ref{app:matrix-proof}. Write $y=\mathcal C(x)$, $s=y-x$, and
$e=y-\Pi_{X_*}(y)$. Coordinate optimality, including clipped updates,
gives $f(x)-f(y)\geq\|s\|^2/2$ and
$\Delta(y)\leq s^\top Te-e^\top He/2$. Apply \eqref{eq:growth}
at $y$ and complete the square in $e$; the descent inequality then
gives \eqref{eq:rate}.

Every admissible $\theta$ gives a valid bound, and minimizing
$A_\theta(M)$ over admissible $\theta$ yields the best spectral certificate
of this form. The matrix expression preserves the directions in which the
triangular interactions and the growth are large. Replacing those matrices
by separate norm and eigenvalue bounds can lose this information.

\subsection{Consequences and scope}

Every problem \eqref{eq:problem} has a constant $\nu>0$ such that
$\Delta(x)\geq(\nu/2)\operatorname{dist}(x,X_*)^2$.
Appendix~\ref{app:growth} proves this by describing the optimal set with
linear equations and applying Hoffman's bound \citep{hoffman1952}.
Thus Theorem~\ref{thm:matrix} always yields some global contraction, even
with a singular Hessian and nonunique solutions. The simpler estimate
$A_{1/2}(\nu I)\leq4\|T\|^2/\nu$ is available, but finding a useful
computable $M$ is the central quantitative task.

\begin{theorem}
\label{thm:pd}
Suppose $H\succ0$ and set
\begin{equation}
 A_0=\lambda_{\max}(TH^{-1}T^\top).
 \label{eq:pd}
\end{equation}
Then
\begin{equation}
 \chi_*\leq\frac{A_0}{1+A_0}.
 \label{eq:pd-rate}
\end{equation}
If the unique optimizer $x_*$ is interior to $\cB$, then \eqref{eq:pd-rate}
holds with equality.
\end{theorem}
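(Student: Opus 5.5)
The upper bound will come from Theorem~\ref{thm:matrix} with $M=0$, followed by a limit $\theta\to0$; equality for an interior optimizer will come from an explicit worst-case direction. When $H\succ0$, the choice $M=0$ satisfies \eqref{eq:growth} trivially, since $\Delta\geq0$, and $H+M=H\succ0$. Hence for every $\theta\in(0,1)$ Theorem~\ref{thm:matrix} gives
\[
\Delta(\mathcal C(x))\leq\frac{A_\theta(0)}{1+A_\theta(0)}\Delta(x),
\qquad A_\theta(0)=\frac{\lambda_{\max}(TH^{-1}T^\top)}{1-\theta}=\frac{A_0}{1-\theta}.
\]
The map $t\mapsto t/(1+t)$ is increasing and continuous, so letting $\theta\downarrow0$ yields \eqref{eq:pd-rate} for each fixed $x$. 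Taking the supremum over $x\in\cB\setminus X_*$ then gives the bound on $\chi_*$. The upper bound is therefore immediate.

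For equality, suppose $x_*$ lies in the interior of $\cB$, so $g(x_*)=0$ and $a=Hx_*$. Then $\Delta(x)=\tfrac12(x-x_*)^\top H(x-x_*)$. I will show that near $x_*$ no clipping occurs during a sweep, so the sweep coincides with the unconstrained Gauss--Seidel map. Writing $e=x-x_*$, that map is $e\mapsto Ge$ with $G=-(I+T)^{-1}T^\top$. This follows because each coordinate update is a continuous function of the current point that fixes $x_*$. Consequently, for $e$ small enough every intermediate point $x^i$ and every unclipped value $x^{i-1}_i-g_i(x^{i-1})$ stays inside the box, and the clipping in \eqref{eq:update} is inactive. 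Since $\Delta$ and this linear map are homogeneous, for $x=x_*+te$ with small $t$ the ratio $\Delta(\mathcal C(x))/\Delta(x)$ equals
\[
\frac{e^\top G^\top HGe}{e^\top He},
\]
and it does not depend on $t$. Hence $\chi_*\geq\lambda_{\max}(H^{-1/2}G^\top HGH^{-1/2})$, the classical one-sweep Gauss--Seidel energy contraction factor.

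The main obstacle is the algebraic identity showing that this Rayleigh maximum equals $A_0/(1+A_0)$. I plan to use $H-G^\top HG=$ the standard Gauss--Seidel energy decrease identity. With $L=I+T$ we have $H=L+T^\top$, and the classical computation gives $H-G^\top HG=(L^{-1}H)^\top D(L^{-1}H)$ with $D=I$. Thus
\[
1-\frac{e^\top G^\top HGe}{e^\top He}=\frac{\|L^{-1}He\|^2}{e^\top He}.
\]
Substituting $u=L^{-1}He$, the right-hand side becomes $\|u\|^2/(u^\top L^\top H^{-1}Lu)$. Its minimum over $u$ is $1/\lambda_{\max}(L^\top H^{-1}L)$. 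Next, $L^\top=H-T$, and expanding gives
\[
L^\top H^{-1}L=(H-T)H^{-1}(H-T^\top)=H-T-T^\top+TH^{-1}T^\top=I+TH^{-1}T^\top.
\]
Its largest eigenvalue is $1+A_0$. The maximal ratio is therefore $1-1/(1+A_0)=A_0/(1+A_0)$, which matches the upper bound; this is consistent with Claim~B.1 of \citet{sunye2021}. The points worth double-checking are the decrease identity with unit diagonal and the sign conventions in $G$. The identity can be verified directly by expanding $G=I-L^{-1}H$.
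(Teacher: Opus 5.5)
Your proposal is correct and follows essentially the paper's proof: the upper bound comes from Theorem~\ref{thm:matrix} (you take $M=0$ and let $\theta\downarrow0$, whereas the paper reruns that argument directly at $\theta=0$), and the equality case likewise reduces to a wholly unclipped sweep near $x_*$ together with $L^\top H^{-1}L=I+TH^{-1}T^\top$. Your substitution $u=L^{-1}H(x-x_*)$ is exactly $-s$, the sweep displacement the paper uses, so the Gauss--Seidel energy identity you invoke is the matrix form of the paper's exact decrease $\Delta(x)-\Delta(\mathcal C(x))=\tfrac12\|s\|^2$ for such a sweep.
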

The proof is given in Appendix~\ref{app:pd}. This factor is already implicit in
\citet[Claim~B.1]{sunye2021}: with $L=I+T$,
$L^\top H^{-1}L=I+TH^{-1}T^\top$, so their contraction factor for the
quadratic problem without box constraints,
$1-1/\lambda_{\max}(L^\top H^{-1}L)$ equals
$A_0/(1+A_0)$, the right-hand side of \eqref{eq:pd-rate}.
The result here shows that it remains an upper bound for a whole sweep
under box constraints, including sweeps with active clipping.

\begin{proposition}
\label{prop:error-bound}
Let $\kappa$ be any global residual error-bound constant. Then
\begin{equation}
 \chi_*\leq\frac{2\kappa\|T\|^2}{1+2\kappa\|T\|^2}.
 \label{eq:eb-rate}
\end{equation}
Moreover, if $\nu>0$ is any quadratic-growth constant, then
\begin{equation}
 \kappa_\nu:=1+\frac{2\|I-H\|}{\nu+\lambda_{\min}(H)}
 \label{eq:kappa-conversion}
\end{equation}
is a valid global residual error-bound constant. In particular,
\eqref{eq:eb-rate} holds with $\kappa=\kappa_\nu$.
\end{proposition}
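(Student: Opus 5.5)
The plan has two parts. The rate \eqref{eq:eb-rate} should follow from a one-sweep argument that reuses two inequalities from the proof of Theorem~\ref{thm:matrix}, with the error bound replacing the growth matrix. The constant \eqref{eq:kappa-conversion} should follow from a standard projection argument combined with quadratic growth at the projected point.

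\emph{Rate.} Write $y=\mathcal C(x)$, $s=y-x$ and $e=y-\Pi_{X_*}(y)$. From the proof of Theorem~\ref{thm:matrix} (Appendix~\ref{app:matrix-proof}) I take
\[
f(x)-f(y)\geq\tfrac12\|s\|^2
\quad\text{and}\quad
\Delta(y)\leq s^\top Te-\tfrac12e^\top He .
\]
Since $H\succeq0$, the second gives $\Delta(y)\leq\|T\|\,\|s\|\,\|e\|$. The error bound gives $\|e\|\leq\kappa\|R(y)\|$.

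The key step is $\|R(y)\|\leq\|T\|\,\|s\|$, proved coordinatewise. After coordinate $i$ is updated, $x_i^i=\Pi_{[0,r_i]}(x_i^i-g_i(x^i))$ holds because the update is an exact clipped minimizer. Afterwards only coordinates $j>i$ change, and they change by $s_j$. Hence $y_i=x_i^i$ and
\[
g_i(y)-g_i(x^i)=\sum_{j>i}H_{ij}s_j=(T^\top s)_i .
\]
Scalar clipping is nonexpansive, so
\[
|R_i(y)|=\bigl|\Pi(y_i-g_i(x^i))-\Pi(y_i-g_i(y))\bigr|\leq|(T^\top s)_i| .
\]
Therefore $\|R(y)\|\leq\|T^\top s\|\leq\|T\|\,\|s\|$.

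Combining these gives $\Delta(y)\leq\kappa\|T\|^2\|s\|^2\leq2\kappa\|T\|^2(\Delta(x)-\Delta(y))$. Rearranging yields \eqref{eq:eb-rate}.

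\emph{Conversion.} Fix $x\in\cB$ and let $z=\Pi_\cB(x-g(x))$, so that $x=z+R$ with $R=R(x)$. Let $\bar z=\Pi_{X_*}(z)$ and $e=z-\bar z$. Three inequalities are needed.
\begin{itemize}
\item Quadratic growth at $z\in\cB$: $\tfrac{\nu}{2}\|e\|^2\leq f(z)-f_*$.
\item The exact second-order expansion of the quadratic: $f(z)-f_*\leq g(z)^\top e-\tfrac12e^\top He$.
\item The projection variational inequality tested at $\bar z\in\cB$: $(R-g(x))^\top(\bar z-z)\leq0$, that is, $g(x)^\top e\leq R^\top e$.
\end{itemize}
Since $g(z)=g(x)-HR$, the third inequality gives $g(z)^\top e\leq R^\top(I-H)e$. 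Using $e^\top He\geq\lambda_{\min}(H)\|e\|^2$, the chain becomes
\[
\tfrac12(\nu+\lambda_{\min}(H))\|e\|^2\leq\|I-H\|\,\|R\|\,\|e\|,
\]
so $\|e\|\leq 2\|I-H\|\,\|R\|/(\nu+\lambda_{\min}(H))$. Finally,
\[
\operatorname{dist}(x,X_*)\leq\|x-\bar z\|\leq\|R\|+\|e\|,
\]
which gives $\kappa_\nu$.

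The main point needing care is the coordinatewise residual bound in the first part. One must use that coordinate $i$ is frozen after step $i$, so that only the strictly upper interactions $T^\top s$ perturb its optimality, and that this holds even when the update is clipped. The rest is routine.
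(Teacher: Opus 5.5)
Your proposal is correct and follows the paper's proof essentially step for step. The rate comes from the descent inequality and the end-gap bound of Theorem~\ref{thm:matrix}, combined with the coordinatewise nonexpansiveness estimate $\|R(\mathcal C(x))\|\leq\|T\|\,\|s\|$; the conversion uses the projection variational inequality at $z=\Pi_{\cB}(x-g(x))$, quadratic growth at $z$, and the triangle inequality. The only omission is the trivial case $z\in X_*$ (where you cannot divide by $\|e\|$ but the bound holds immediately), which the paper states explicitly.
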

Appendix~\ref{app:error-bound} proves the proposition and shows
that \eqref{eq:kappa-conversion} sharpens the general quadratic-growth-to-error-
bound conversion of \citet{drusvyatskiylewis2018} on the present problem class.
It also proves that \eqref{eq:eb-rate} strictly improves the Wang--Lin factor
for the same $\kappa$.

\section{Boundary margins give a computable growth matrix}
\label{sec:boundary}

This section first turns certified lower bounds on optimal-gradient
components into a growth matrix and then shows how to obtain such bounds
from any feasible reference point.

All optimizers have the same gradient, denoted by $g_*$
(Appendix~\ref{app:growth}). If $g_{*,i}>0$, every optimizer has
$x_{*,i}=0$; if $g_{*,i}<0$, every optimizer has $x_{*,i}=r_i$.
For now, suppose that certified margins are available:
\begin{equation}
 0\leq m_i\leq |g_{*,i}|,\qquad i=1,\ldots,p.
 \label{eq:margins}
\end{equation}
Coordinates with no certified margin receive $m_i=0$. These numbers
define the matrix
\begin{equation}
 M_{\rm scr}:=H+2\diag(m_1/r_1,\ldots,m_p/r_p),
 \label{eq:screen-matrix}
\end{equation}

To show that $M_{\rm scr}$ is a growth matrix,
fix any optimizer $x_*$ and write $d=x-x_*$.
The box optimality conditions give $g_{*,i}d_i\geq0$ for every $i$.
If $m_i>0$, the optimal coordinate is an endpoint, so $g_{*,i}d_i=|g_{*,i}|\,|d_i|$.
Since $|d_i|\leq r_i$,
\begin{equation}
 \Delta(x)=\tfrac12d^\top Hd+g_*^\top d
 \geq\tfrac12d^\top Hd+\sum_i\frac{m_i}{r_i}d_i^2
 =\tfrac12d^\top M_{\rm scr}d.
 \label{eq:screen-growth}
\end{equation}
Unlike \eqref{eq:growth}, which is stated for the nearest optimizer,
\eqref{eq:screen-growth} holds relative to \emph{every} optimizer.
It converts the linear penalty for leaving an optimal endpoint into a global
quadratic lower bound using only $H$ and the certified margins.

When $M_{\rm scr}\succ0$, substituting it in Theorem~\ref{thm:matrix}
gives a computable global rate. Let $F=\{i:m_i=0\}$ denote the remaining
coordinates. The precise applicability test is
\begin{equation}
 M_{\rm scr}\succ0\quad\Longleftrightarrow\quad H_{FF}\succ0,
 \label{eq:rank}
\end{equation}
where $H_{FF}$ is the principal submatrix on $F$ and the condition is
vacuous if $F$ is empty. Indeed, a vector has zero quadratic form under
$M_{\rm scr}$ exactly when it vanishes on all positive-margin coordinates
and has zero quadratic form under $H$.
This connects the test to the restricted-curvature conditions used in
local CD analyses \citep{klopfenstein2020}; here it certifies a global
bound before the iterates identify those endpoints.

There is a real limitation: $M_{\rm scr}\succ0$ implies uniqueness, by
putting a second optimizer into \eqref{eq:screen-growth}.
Exact dual nonuniqueness is nevertheless expected to be uncommon in practice:
for generic SVM data, the box constraints prevent Hessian null directions
from remaining feasible along the optimal face. It arises mainly from
degeneracies such as duplicated or linearly dependent margin examples.
On a nonunique instance, test \eqref{eq:rank} necessarily fails, so this
screening construction is inconclusive; Theorem~\ref{thm:matrix} still
applies with an appropriate growth matrix.

\subsection{Margins from one feasible point}
\label{sec:fw}

We now construct the margins assumed above from any feasible reference
point $z$. Its Frank--Wolfe gap \citep{jaggi2013} is
\begin{equation}
 G_{\rm FW}(z):=\max_{v\in\cB}g(z)^\top(z-v)
 =\sum_i\bigl[z_i(g_i(z))_++(r_i-z_i)(-g_i(z))_+\bigr].
 \label{eq:fw}
\end{equation}
For any optimizer, set $d=z-x_*$. Then
\[
 G_{\rm FW}(z)\geq g(z)^\top d
 =d^\top Hd+g_*^\top d\geq d^\top Hd.
\]
Cauchy--Schwarz applied after multiplication by the positive semidefinite
square root of $H$ gives
$|g_i(z)-g_{*,i}|^2\leq H_{ii}d^\top Hd\leq G_{\rm FW}(z)$.
Consequently
\begin{equation}
 m_i=\bigl(|g_i(z)|-\sqrt{G_{\rm FW}(z)}\bigr)_+
 \label{eq:fw-margin}
\end{equation}
satisfies \eqref{eq:margins}. A positive margin also certifies the sign
of $g_{*,i}$, and hence its endpoint. This is a specialization of
gap-based safe screening \citep{zimmert2015,ndiaye2017}; the ensuing
matrix \eqref{eq:screen-matrix} uses the \emph{size} of the certified
margin to bound the rate.

The point $z$ may come from cyclic CD or another optimizer. Once computed,
the resulting global bound applies to every starting point of the original
cyclic method. Better information about the optimum improves the
certificate while the cyclic method itself remains fixed.

\subsection{Refinements and limits}

Appendix~\ref{app:screening} develops complementary margins from exact
gradient ranges on certified faces and from objective lower bounds. It
also gives an auxiliary concave maximization for improving those lower
bounds and a stronger matrix on a known objective sublevel set.
Retaining the largest previously certified margin for each coordinate
makes $M_{\rm scr}$ nondecreasing in the positive semidefinite order.
For fixed $\theta$, the coefficient $A_\theta(M_{\rm scr})$ then cannot
increase, because inversion reverses that order for positive definite
matrices.

The spectral bound is at least as sharp as replacing $M_{\rm scr}$ by its
smallest eigenvalue times $I$. Converting that same eigenvalue to the
error-bound constant in \eqref{eq:kappa-conversion} also cannot
improve the optimized spectral bound; Appendix~\ref{app:dominance} proves
the comparison. A separately obtained, sharper residual error bound can
still be useful.

\section{A fixed Hessian with arbitrarily slow contraction}
\label{sec:singular}

The next family shows why boundary information is needed and tests the
sharpness of the certificate. For $0<\delta\leq1/4$, let
\begin{equation}
 H=\begin{bmatrix}1&1\\1&1\end{bmatrix},\qquad
 a=\begin{bmatrix}1\\1+\delta\end{bmatrix},\qquad
 r=\frac1{1+\delta},\qquad
 \cB=[0,1]\times[0,r].
 \label{eq:family}
\end{equation}
These are normalized SVM data with $C=1$ and original Hessian
$Q=\bigl[\begin{smallmatrix}1&r\\r&r^2\end{smallmatrix}\bigr]$.
Throughout the family, $H$ has rank one, positive
eigenvalue $2$, and $\|T\|=1$. Both box side lengths stay at least $4/5$.

\begin{theorem}
\label{thm:singular}
For \eqref{eq:family}, the unique optimizer is $x_*=(1-r,r)^\top$, with
$g_*=(0,-\delta)^\top$. For coordinate order $1,2$, the exact worst-case
contraction factor is
\begin{equation}
 \chi_*=1-\frac{\delta}{2r}.
 \label{eq:family-exact}
\end{equation}
The point $(1,0)^\top$ attains this worst-case ratio. Starting there,
cyclic CD first reaches the optimizer after exactly
$\lceil r/\delta\rceil+1$ sweeps, and more than
$\lfloor r/(2\delta)\rfloor$ sweeps are needed to halve its initial gap.
Using the exact margin $m_2=\delta$ in \eqref{eq:screen-matrix}, the
single choice $\theta=1/2$ in Theorem~\ref{thm:matrix} gives
\begin{equation}
 \chi_*\leq 1-\frac{3\delta}{6r+7\delta},\qquad
 \frac{3\delta/(6r+7\delta)}{1-\chi_*}\longrightarrow1
 \quad(\delta\downarrow0).
 \label{eq:family-bound}
\end{equation}
\end{theorem}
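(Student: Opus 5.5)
The whole proof rests on the explicit dynamics of the sweep, so I will first write the objective in reduced variables. Set $s=x_1+x_2$, so that $g(x)=(s-1,\,s-1-\delta)^\top$. Any optimizer must have $g_{*,1}=0$, because its first coordinate is interior. Then $g_{*,2}=-\delta<0$ forces $x_{*,2}=r$ and hence $x_{*,1}=1-r\in(0,1)$. Checking the KKT conditions confirms optimality, and the endpoint argument of Section~\ref{sec:boundary} gives uniqueness. For $x\in\cB$, define $u=s-1$ and $v=r-x_2\in[0,r]$. Expanding $\Delta$ around $x_*$ as in \eqref{eq:screen-growth} with $d=x-x_*$ gives
\[
\Delta(x)=\tfrac12 u^2+\delta v .
\]
Next I compute one sweep. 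The first update sets $x_1\leftarrow\Pi_{[0,1]}(1-x_2)=1-x_2$. No clipping occurs here, since $1-x_2\in[1-r,1]$. The second update sets $x_2\leftarrow\min\{r,x_2+\delta\}$. This gives the closed form $u'=\min\{\delta,v\}$ and $v'=(v-\delta)_+$, independent of $u$.

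The exact factor then follows from two cases.
\begin{itemize}
\item If $v\ge\delta$, then $\Delta(\mathcal C(x))=\delta v-\delta^2/2$. The ratio $(\delta v-\delta^2/2)/(\tfrac12u^2+\delta v)$ is maximized at $u=0$ and at the largest value $v=r$, where it equals $1-\delta/(2r)$.
\item If $0<v<\delta$, the ratio is at most $(v^2/2)/(\delta v)<1/2\le 1-\delta/(2r)$, using $\delta\le1/4$ and $r\ge4/5$.
\item If $v=0$ and $u\ne0$, a single sweep reaches $X_*$.
\end{itemize}
The maximizer $u=0$, $v=r$ corresponds to the point $(1,0)^\top$.

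For the trajectory from $(1,0)^\top$, induction gives $x_2^{(k)}=\min\{r,k\delta\}$ and $u_k=\min\{\delta,v_{k-1}\}$. The coordinate $x_2$ first equals $r$ after $\lceil r/\delta\rceil$ sweeps. At that moment the last increment is still positive, so $u\ne0$, and exactly one more sweep is needed to reach $x_*$. While $v_k\ge0$, the gap is $\Delta_k=\delta(r-k\delta)+\delta^2/2$, compared with $\Delta_0=\delta r$. The condition $\Delta_k\le\Delta_0/2$ is equivalent to $k\ge r/(2\delta)+1/2$, so more than $\lfloor r/(2\delta)\rfloor$ sweeps are required. The only care needed here is bookkeeping at the last partial step, in particular when $r/\delta$ is an integer.

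For the certificate \eqref{eq:family-bound}, I take $M=M_{\rm scr}=H+2\diag(0,\delta/r)$. This is a growth matrix by \eqref{eq:screen-growth}, and \eqref{eq:rank} holds with $F=\{1\}$. Since $T=e_2e_1^\top$, the matrix $T(H+\theta M)^{-1}T^\top$ has the single nonzero eigenvalue $(K^{-1})_{11}=K_{22}/\det K$, where $K=H+\theta M$. At $\theta=1/2$ we have $K=\tfrac32H+\diag(0,\delta/r)$, so $\det K=\tfrac32\delta/r$. This gives $A_{1/2}=(6r+4\delta)/(3\delta)$, hence $A/(1+A)=1-3\delta/(6r+7\delta)$ by Theorem~\ref{thm:matrix}. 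The limit follows from $\bigl(3\delta/(6r+7\delta)\bigr)/(\delta/2r)=6r/(6r+7\delta)\to1$. I expect the main obstacle to be the case analysis for the supremum, namely ensuring that no clipped or near-optimal regime beats $1-\delta/(2r)$. The reduced $(u,v)$ coordinates make this routine.
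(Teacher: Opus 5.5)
Your proposal is correct and follows essentially the same route as the paper. Your $(u,v)$ are the paper's $(-b,t)$, your reduced sweep map $u'=\min\{\delta,v\}$, $v'=(v-\delta)_+$ is exactly \eqref{eq:family-cycle}, and the trajectory and $\theta=1/2$ spectral computations coincide. The differences are cosmetic: you handle $0<v<\delta$ with the cruder bound $1/2$ rather than the monotone profile \eqref{eq:family-profile}, and your claim that the first optimal coordinate is interior is asserted rather than derived. That claim is not load-bearing, since verifying KKT at $(1-r,r)^\top$ and using the common optimal gradient already gives existence and uniqueness.
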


The proof in Appendix~\ref{app:singular} computes the gap and the entire
cycle explicitly:
\begin{equation}
 \Delta(x)=\tfrac12(x_1+x_2-1)^2+\delta(r-x_2),\qquad
 \mathcal C(x)=\bigl(1-x_2,\min\{x_2+\delta,r\}\bigr)^\top.
 \label{eq:family-map}
\end{equation}
Along the Hessian null direction, the gap comes entirely from the
boundary penalty. At $(1,0)^\top$ the gap is $\delta r$, whereas the
first sweep decreases it by only $\delta^2/2$. Later sweeps move the
second coordinate by $\delta$ until it reaches its endpoint, explaining
the long trajectory.

Thus no factor strictly below one depending only on the normalized Hessian
can hold uniformly over these SVM instances. This complements the
unconstrained quadratic examples of \citet{sunye2021} and the
margin-dependent identification bounds of \citet{nutini2022}: dimension
and all normalized Hessian eigenvalues are fixed here, and both the
global contraction and a slow trajectory are known exactly.
The last limit in \eqref{eq:family-bound} establishes sharpness of the
\emph{decrease fraction}, a stronger comparison than merely observing
that both factors tend to one.

At $\delta=0$, the optimal set expands to the segment $x_1+x_2=1$ and
one sweep solves the problem. This expansion explains the discontinuity:
for positive $\delta$ the initial gap itself tends to zero, and the ratio in
\eqref{eq:family-exact} is not uniform through the change in the solution
set. Thus some singular instances with zero margins still converge in one
sweep.

\section{Experiments}
\label{sec:experiments}

We evaluate the size of the one-sweep bounds on six scaled binary SVM
datasets from the LIBSVM collection \citep{changlin2011libsvm}.
The code for the experiments is publicly available.\footnote{\url{https://github.com/evaibodova/cyclic-cd-rate}}
This section reports sonar, heart, and australian; Appendix~\ref{app:experiments}
contains diabetes, fourclass, and german.numer, together with the full
protocol. For each dataset we use the linear kernel and the RBF kernel
$K(x,x')=\exp(-\gamma\|x-x'\|^2)$, with
$C\in\{10^{-2},10^{-1},1,10,10^2\}$ and, for RBF,
$\gamma/\gamma_0\in\{1/10,1/3,1,3,10\}$. Here $\gamma_0$ is the
reciprocal of the median nonzero squared pairwise distance. We analyze the
original data order and the dual without an unregularized intercept.

The spectral certificate gives an upper bound on $\chi_*$. For feasible
$x,z$ with $f(\mathcal C(x))>f(z)$, a complementary witness bound is
\begin{equation}
 \frac{f(\mathcal C(x))-f(z)}{f(x)-f(z)}
 \leq\chi_*\leq\frac{A_\theta(M_{\rm scr})}{1+A_\theta(M_{\rm scr})},
 \label{eq:bracket}
\end{equation}
when the screened matrix is admissible. Appendix~\ref{app:witness}
gives the lower-bound argument, witness search, and verification details.

Let $\chi_{\rm scr}$ denote the screening-based spectral upper bound on
$\chi_*$ obtained by minimizing \eqref{eq:A} over $\theta$, and let
$\chi_{\rm obs}$ denote the largest witness ratio found by a
multistart local search. For the same screened growth constant we also
evaluate the Wang--Lin upper bound $\chi_{\rm WL}$ from \eqref{eq:WL};
when $H$ passes a positive-definiteness check we evaluate the
Gauss--Seidel upper bound $\chi_{\rm GS}$ from \eqref{eq:pd-rate}.
We plot the
\emph{decrease fractions} $1-\chi$: a larger lower bound means a stronger
contraction guarantee.

\paragraph{Linear SVM.}
Table~\ref{tab:linear-main} gives the linear-kernel decrease fractions.
The linear Hessians are singular, so the positive-definite
Gauss--Seidel formula does not apply. Across all six datasets, screening
gives a positive guarantee in 24 of the 30 linear configurations.
For sonar and heart, screening
provides a positive guarantee throughout the $C$ grid and is several
orders of magnitude stronger than Wang--Lin, although still below the
observed search values. For australian, the screened growth matrix
does not pass the numerical positive-definiteness test at any $C$;
the Wang--Lin constant computed from that matrix is therefore also
unavailable.

\begin{table}[H]
\centering
\small
\begin{tabular}{llrrr}
\toprule
Dataset & $C$ & $1-\chi_{\rm obs}$ & $1-\chi_{\rm scr}$ & $1-\chi_{\rm WL}$ \\
\midrule
Sonar & $0.01$ & $8.23\times10^{-2}$ & $1.03\times10^{-3}$ & $2.49\times10^{-11}$ \\
 & $0.1$ & $1.39\times10^{-2}$ & $9.90\times10^{-5}$ & $4.68\times10^{-12}$ \\
 & $1$ & $1.32\times10^{-3}$ & $8.84\times10^{-6}$ & $3.84\times10^{-13}$ \\
 & $10$ & $4.38\times10^{-4}$ & $1.05\times10^{-6}$ & $1.69\times10^{-14}$ \\
 & $100$ & $4.50\times10^{-5}$ & $7.74\times10^{-11}$ & $7.91\times10^{-19}$ \\
\midrule
Heart & $0.01$ & $1.43\times10^{-1}$ & $4.52\times10^{-4}$ & $4.00\times10^{-11}$ \\
 & $0.1$ & $5.32\times10^{-2}$ & $3.10\times10^{-5}$ & $3.42\times10^{-12}$ \\
 & $1$ & $6.38\times10^{-3}$ & $5.77\times10^{-7}$ & $7.34\times10^{-14}$ \\
 & $10$ & $6.95\times10^{-3}$ & $4.42\times10^{-9}$ & $8.28\times10^{-17}$ \\
 & $100$ & $1.14\times10^{-2}$ & $4.55\times10^{-9}$ & $4.53\times10^{-16}$ \\
\midrule
Australian & $0.01$ & $8.79\times10^{-2}$ & -- & -- \\
 & $0.1$ & $4.56\times10^{-2}$ & -- & -- \\
 & $1$ & $4.65\times10^{-2}$ & -- & -- \\
 & $10$ & $9.84\times10^{-3}$ & -- & -- \\
 & $100$ & $2.51\times10^{-2}$ & -- & -- \\
\bottomrule
\end{tabular}
\caption{Linear-SVM decrease fractions for the three main-text datasets.
The observed column is a search-based witness value; scr and WL are
floating-point evaluations of global upper bounds on $\chi_*$. For
Australian, screening fails its numerical growth test at every $C$,
so both corresponding columns are unavailable.}
\label{tab:linear-main}
\end{table}

\paragraph{RBF bounds.}
Figure~\ref{fig:rbf-main} shows the observed and screened decrease fractions
across the complete $5\times5$ grid for each main-text dataset. The
screened decrease fraction is positive in 73 of the 75 configurations;
the two missing cases are australian at $\gamma=\gamma_0/10$ and
$C\in\{10,100\}$, where the numerical positive-definiteness test for
the screened growth matrix fails. The distance between the plotted values
shows that the certificate can remain conservative even when it is much
more informative than a scalar bound.

\begin{figure}[t]
\centering
\includegraphics[width=.86\linewidth]{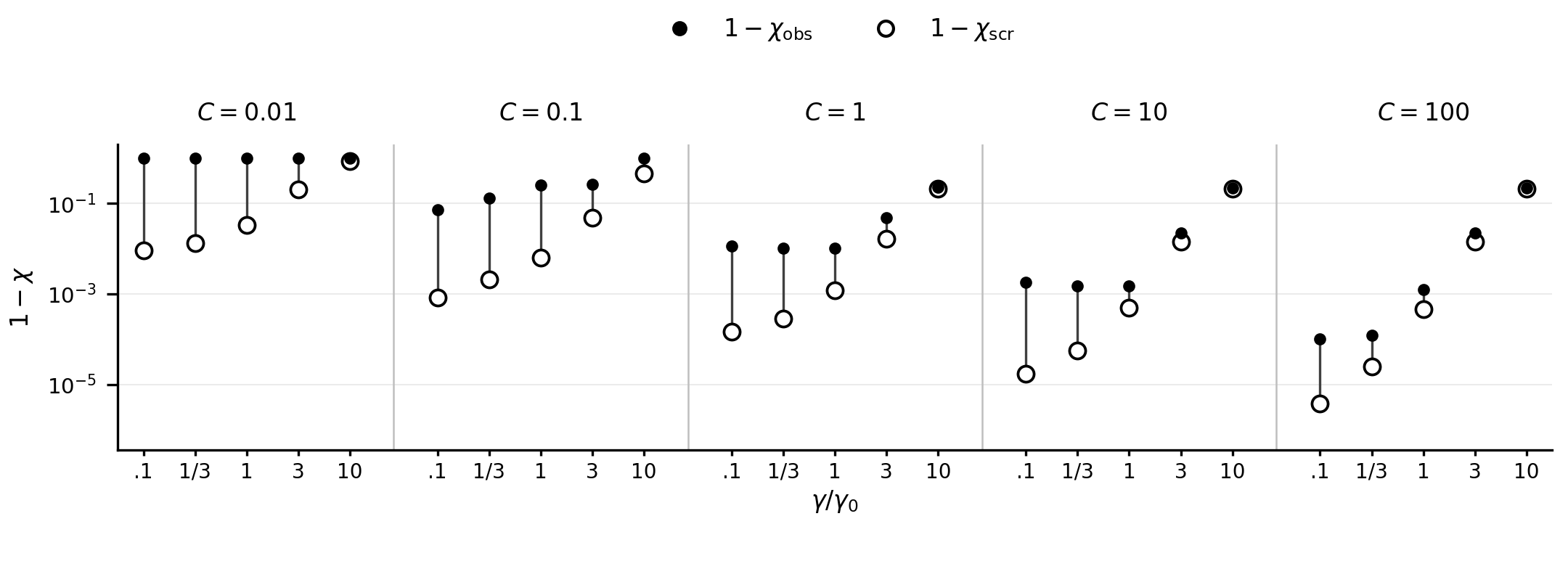}
\includegraphics[width=.86\linewidth]{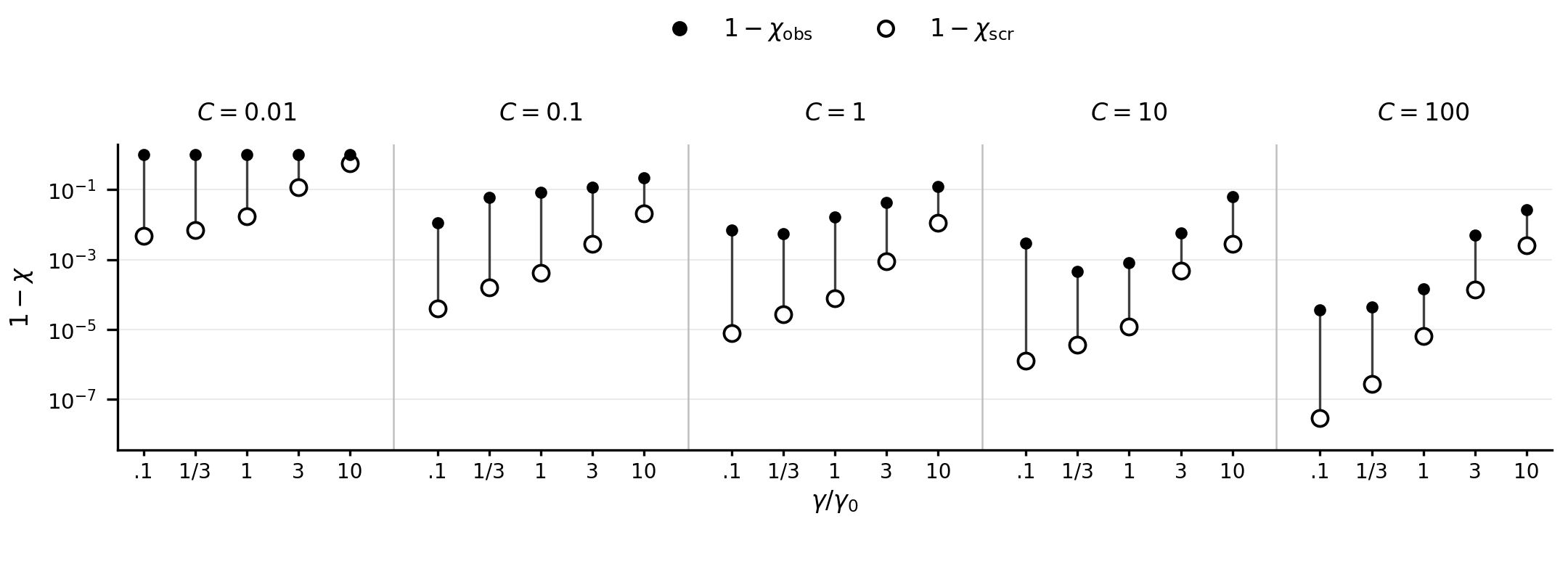}
\includegraphics[width=.86\linewidth]{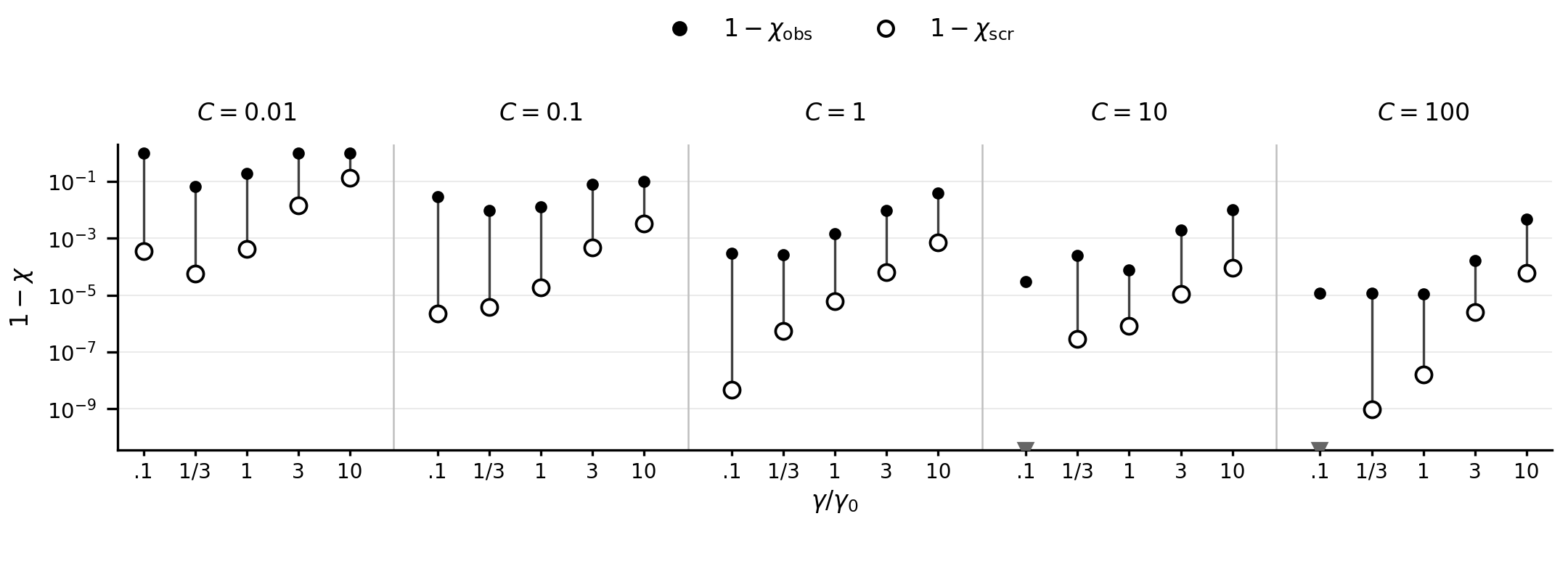}
\caption{RBF one-sweep decrease fractions for sonar (top), heart (middle),
and australian (bottom). Filled circles show $1-\chi_{\rm obs}$ and open
circles show $1-\chi_{\rm scr}$; vertical segments join the two values.
Horizontal ticks give $\gamma/\gamma_0$ within each labeled $C$ group.
The vertical scale is logarithmic. Missing open circles
indicate unavailable screened certificates.}
\label{fig:rbf-main}
\end{figure}

Figure~\ref{fig:heatmaps-main} compares guaranteed decrease fractions
with Wang--Lin by plotting
$\log_{10}[(1-\chi_{\rm scr})/(1-\chi_{\rm WL})]$.
The strict same-constant comparison follows from
Proposition~\ref{prop:error-bound} and Appendix~\ref{app:dominance}.
Across the 73 available pairs, the median ratio of decrease fractions
is $1.7\times10^7$. The corresponding median ratio against
Gauss--Seidel over 60 available pairs is $17.3$.

\FloatBarrier
\begin{figure}[H]
\centering
\includegraphics[width=.9\linewidth]{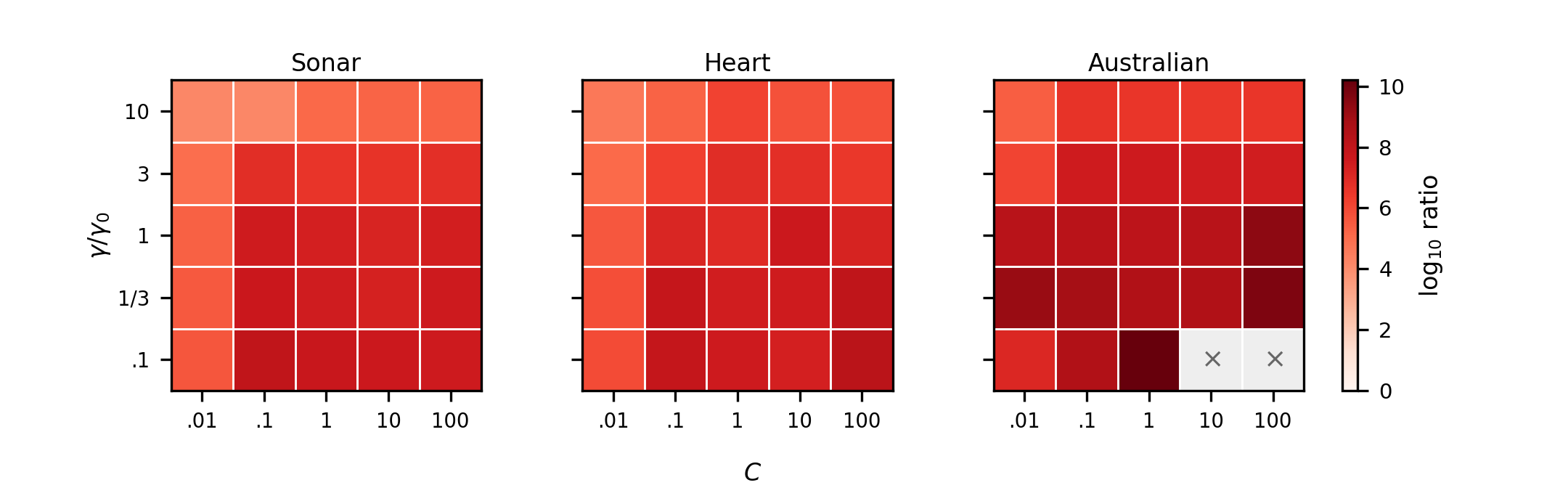}
\caption{RBF certificate improvement on Wang--Lin for sonar, heart, and
australian. Each cell is the base-10 logarithm of the ratio of guaranteed
decrease fractions. Red cells favor screening; a cross marks an
unavailable comparison.}
\label{fig:heatmaps-main}
\end{figure}

\paragraph{Coordinate order.}
Figure~\ref{fig:permutations}
compares 100 seeded coordinate permutations on sonar in two RBF settings
and one singular linear setting. Both observed and screened decrease
fractions vary with order, while the Wang--Lin value is essentially
order invariant. This shows order sensitivity; selecting an order by
its certificate would require separate evaluation.

\begin{figure}[H]
\centering
\includegraphics[width=.85\linewidth]{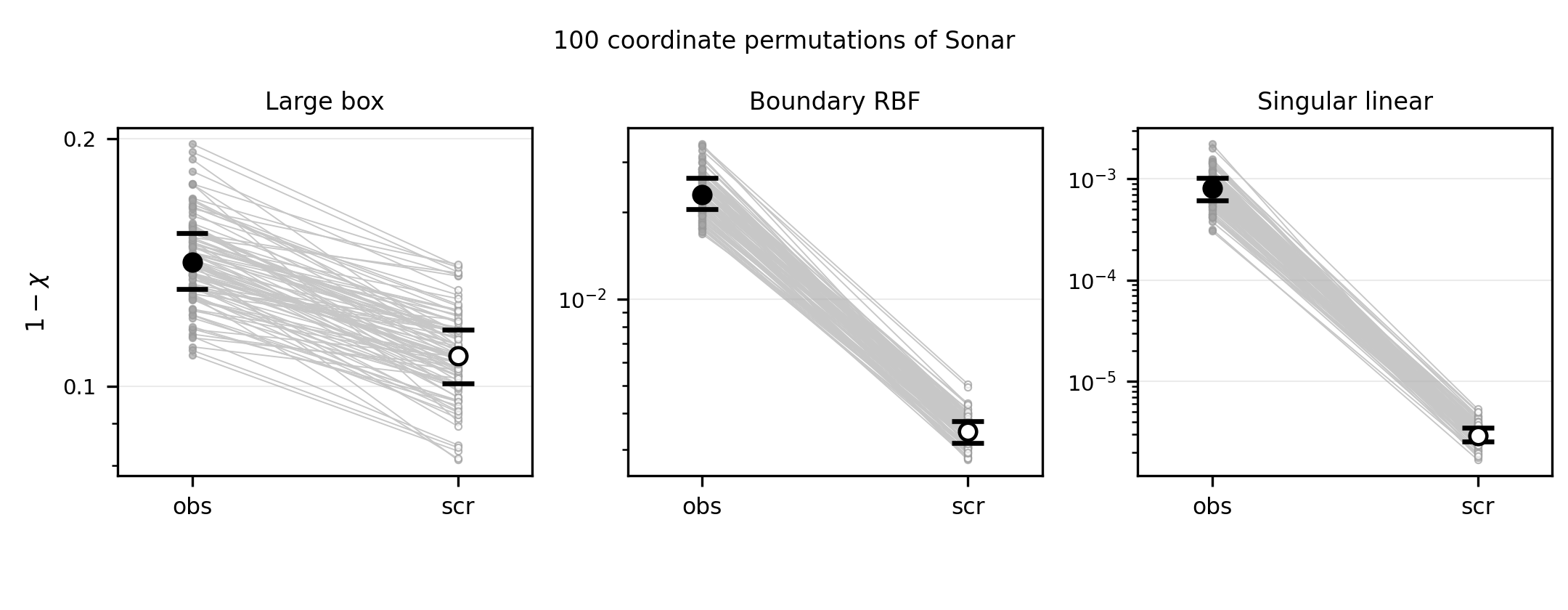}
\caption{Effect of 100 coordinate permutations on sonar. Left: large-box
RBF, $C=100$, $\gamma=10\gamma_0$, with a mostly free optimum;
middle: RBF, $C=1$, $\gamma=3\gamma_0$, with many active bounds; right:
linear SVM, $C=1$, with a singular Hessian. Gray lines pair the two
decrease-fraction estimates for the same permutation. Large markers
show medians and black bars show interquartile ranges; each panel has
its own logarithmic vertical scale.}
\label{fig:permutations}
\end{figure}

\section{Discussion}

Boundary margins can supply objective growth in directions missed by a
singular Hessian. Keeping that growth as a matrix and combining it with
the cyclic order gives the spectral certificate, whose decrease
fraction is asymptotically exact on a singular SVM family. The result
separates existence of a linear rate from the harder task of certifying
its magnitude on an instance. The main remaining limitations are the
possible failure of the screened matrix test, the conservatism of other
instances' bounds, and the cost of computing them. Extending the analysis
to coupled dual constraints and establishing whether the bounds reliably
guide coordinate ordering are natural next questions.

\subsection*{AI use statement}
Generative AI was used to improve wording, correct language errors, assist
with additional literature searches, and support proof development under
careful author guidance. The authors remain fully responsible for the
manuscript.

\subsection*{Reproducibility statement}
The appendix contains the proofs and a full description of the experiments.
The code and scripts for reproducing the experiments are linked in
Section~\ref{sec:experiments}.

\bibliographystyle{plainnat}
\bibliography{references}
\clearpage
\appendix
\numberwithin{equation}{section}
\numberwithin{theorem}{section}
\section{Proof of the spectral certificate}
\label{app:matrix-proof}

Fix one sweep, and write $y=\mathcal C(x)$ and $s=y-x$. Coordinate $i$
changes only once, by $s_i$. Its optimality condition is
\begin{equation}
 g_i(x^i)(z_i-y_i)\geq0\qquad(z_i\in[0,r_i]).
 \label{eq:coordinate-optimality}
\end{equation}
Exact quadratic expansion, followed by
$g_i(x^i)=g_i(x^{i-1})+s_i$, gives
\[
 f(x^{i-1})-f(x^i)
 =-g_i(x^{i-1})s_i-\tfrac12s_i^2
 =-g_i(x^i)s_i+\tfrac12s_i^2\geq\tfrac12s_i^2.
\]
The last inequality uses \eqref{eq:coordinate-optimality} with the old
feasible value $z_i=x_i^{i-1}$. Summing over the sweep yields
\begin{equation}
 f(x)-f(y)\geq\tfrac12\|s\|^2.
 \label{eq:decrease}
\end{equation}

Let $\bar y=\Pi_{X_*}(y)$ and $e=y-\bar y$. After update $i$, only
coordinates $j>i$ can change partial derivative $i$. Therefore
\begin{equation}
 g_i(y)-g_i(x^i)=\sum_{j>i}H_{ij}s_j=(T^\top s)_i.
 \label{eq:chronology}
\end{equation}
Using $z_i=\bar y_i$ in \eqref{eq:coordinate-optimality} gives
$g_i(x^i)e_i\leq0$. Exact expansion around $y$ and
\eqref{eq:chronology} now imply
\begin{align}
 \Delta(y)
 &=g(y)^\top e-\tfrac12e^\top He \notag\\
 &=s^\top Te+\sum_i g_i(x^i)e_i-\tfrac12e^\top He
 \leq s^\top Te-\tfrac12e^\top He.
 \label{eq:end-gap}
\end{align}
This step accounts for clipped updates through their optimality signs.

Set $G=H+\theta M$. Since
$G=(1-\theta)H+\theta(H+M)\succ0$, its inverse exists.
Subtracting $\theta$ times the lower bound
$\Delta(y)\geq e^\top Me/2$ from \eqref{eq:end-gap} yields
\[
 (1-\theta)\Delta(y)\leq s^\top Te-\tfrac12 e^\top Ge.
\]
Complete the square:
\begin{align*}
 s^\top Te-\tfrac12e^\top Ge
 &=\tfrac12s^\top TG^{-1}T^\top s
   -\tfrac12(e-G^{-1}T^\top s)^\top G(e-G^{-1}T^\top s)\\
 &\leq\tfrac12\lambda_{\max}(TG^{-1}T^\top)\|s\|^2.
\end{align*}
Division by $1-\theta$ proves \eqref{eq:gap-step}. Combining it with
\eqref{eq:decrease} gives
\[
 \Delta(y)\leq A_\theta(M)\bigl(\Delta(x)-\Delta(y)\bigr),
\]
which rearranges to \eqref{eq:rate}, including when $s=0$.
\hfill$\square$

\section{Normalization and solution geometry}
\label{app:scaling}

\subsection{From the SVM dual to the normalized quadratic}

For labeled examples $(v_i,y_i)$, $i=1,\ldots,p$, with
$y_i\in\{-1,1\}$ and a positive semidefinite kernel $K$, put
$Q_{ij}=y_i y_j K(v_i,v_j)$. The hinge-loss SVM without an
unregularized intercept has dual minimization form
\begin{equation}
 \min_{\alpha\in[0,C]^p}q(\alpha),\qquad
 q(\alpha)=\tfrac12\alpha^\top Q\alpha-\bm1^\top\alpha,
 \label{eq:svm}
\end{equation}
where $C>0$ and $\bm1$ is the all-ones vector. This is the box dual
used in coordinate-descent SVM training \citep{hsieh2008,wanglin2014}.
A bias incorporated as a regularized feature can be included in the
kernel. A separate unregularized bias instead imposes an equality
constraint on $\alpha$, outside our setting.

Assume first that $Q_{ii}>0$ for every coordinate. Set
\begin{equation}
 \begin{gathered}
 D=\diag(Q_{11},\ldots,Q_{pp}),\qquad x=D^{1/2}\alpha,\\
 H=D^{-1/2}QD^{-1/2},\qquad a=D^{-1/2}\bm1,\qquad
 r_i=C\sqrt{Q_{ii}}.
 \end{gathered}
 \label{eq:normalization}
\end{equation}
Then $q(\alpha)=f(x)$, the box becomes $\prod_i[0,r_i]$, and
$H_{ii}=1$. Because the change of variables rescales each coordinate
independently by a positive number, exact minimization along a coordinate
commutes with this transformation. Every cycle and its objective ratio
are preserved. In particular, $a_i r_i=C$.

If $Q_{ii}=0$, positive semidefiniteness gives
$|Q_{ij}|^2\leq Q_{ii}Q_{jj}=0$ for every $j$. The objective depends on
$\alpha_i$ only through $-\alpha_i$, and its first update sets it to
$C$ permanently. Such coordinates may be removed before scaling. The
original gap is the remaining gap plus the nonnegative sum of their
linear gaps, while the endpoint of the first sweep has none of these
linear gaps. Thus a contraction bound for the remaining coordinates also
bounds the original first sweep and all later sweeps. If all coordinates
are removed, one sweep solves the problem.

\subsection{The common optimal gradient and quadratic growth}
\label{app:growth}

Let $B=H^{1/2}$ denote the symmetric positive semidefinite square root,
so $H=B^\top B$. Fix one optimizer $x_*$, put $w_*=Bx_*$ and
$g_*=Hx_*-a$, and define
\[
 J=\{i:g_{*,i}\ne0\},\qquad
 b_i=\begin{cases}0,&g_{*,i}>0,\\r_i,&g_{*,i}<0\end{cases}
 \quad(i\in J).
\]
The coordinates in $J$ have a nonzero optimal derivative.

We next show that $Bx_*$ and the gradient $Hx_*-a$ have the same values at
every optimizer, and use this fact to describe the solution set and the
objective gap. Related common-gradient and polyhedral-description results are
classical for convex quadratic programs \citep{mangasarian1988}.
More generally, \citet[Section~4, especially Theorem~10]{necoara2019} derive
the common-image property and Hoffman-based quadratic growth for objectives of the form
$g(Ax)+c^\top x$ over polyhedra. Their setting contains our problem with
$A=B$, $g(u)=\|u\|^2/2$, and $c=-a$; compactness of the box turns their
sublevel result into global quadratic growth. We include the short specialized
argument because it also gives the exact decomposition
\eqref{eq:decomposition} used below.

\begin{lemma}
\label{lem:geometry}
The vectors $w_*$ and $g_*$ are independent of the choice of optimizer.
Moreover,
\begin{align}
 X_*&=\{x\in\cB:Bx=w_*,\ x_i=b_i\ (i\in J)\},
 \label{eq:optset}\\
 \Delta(x)&=\tfrac12\|B(x-x_*)\|^2
             +\sum_{i\in J}|g_{*,i}|\,|x_i-b_i|.
 \label{eq:decomposition}
\end{align}
\end{lemma}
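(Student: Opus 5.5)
The argument starts from the exact quadratic expansion of $f$ around a fixed optimizer $x_*$. For any $x\in\cB$ with $d=x-x_*$,
\[
 f(x)-f(x_*)=g_*^\top d+\tfrac12 d^\top Hd=g_*^\top d+\tfrac12\|Bd\|^2 .
\]
The box optimality conditions at $x_*$ say $g_*^\top(z-x_*)\geq0$ for all $z\in\cB$. Coordinatewise, they say that $g_{*,i}>0$ forces $x_{*,i}=0$ and $g_{*,i}<0$ forces $x_{*,i}=r_i$. Hence $x_{*,i}=b_i$ for $i\in J$, and each term $g_{*,i}d_i$ with $i\in J$ equals $|g_{*,i}|\,|x_i-b_i|$. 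Coordinates outside $J$ contribute nothing to $g_*^\top d$. This gives \eqref{eq:decomposition} immediately, provided $w_*$, $g_*$, $J$ and $b$ are well defined, which the remaining steps establish.

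\emph{Independence of the optimizer.} Take two optimizers $x_*$ and $x_*'$ and apply the expansion with $x=x_*'$. The left side is zero, and both terms on the right are nonnegative: the first by optimality of $x_*$, the second because $H\succeq0$. So both terms vanish. In particular $\|B(x_*'-x_*)\|=0$, which gives $Bx_*'=Bx_*$. Since $H=B^\top B$, we get $Hx_*'=Hx_*$, hence $g_*'=g_*$. Thus $w_*$, $g_*$, and therefore $J$ and $b$, do not depend on the choice of optimizer, and the decomposition holds relative to any of them.

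\emph{The description \eqref{eq:optset}.} For the inclusion $\subseteq$, every optimizer has $x_i=b_i$ on $J$ by the sign argument above, and has $Bx=w_*$ by the independence just shown. For the inclusion $\supseteq$, let $x\in\cB$ satisfy $Bx=w_*$ and $x_i=b_i$ for $i\in J$. Then both terms of \eqref{eq:decomposition} vanish, so $\Delta(x)=0$ and $x\in X_*$.

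The only delicate point is the order of the argument: the common-gradient claim must be proved before \eqref{eq:decomposition} is used with optimizer-independent $J$ and $b$. The two-optimizer argument avoids any circularity, because it uses only the expansion about one fixed $x_*$ together with $B^\top B=H$.
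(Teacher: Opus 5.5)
Your proof is correct and follows the paper's argument essentially step for step: expand $f$ at a fixed optimizer, use the coordinatewise sign conditions to turn the linear term into the boundary-penalty sum, and read off $X_*$ as the zero set of the two nonnegative terms. The only variation is the independence step: you combine the variational inequality $g_*^\top(x_*'-x_*)\geq0$ with the same expansion, whereas the paper uses the midpoint identity $f\bigl((x_*+z_*)/2\bigr)=\tfrac12f(x_*)+\tfrac12f(z_*)-\tfrac18\|B(x_*-z_*)\|^2$; both give $Bx_*=Bz_*$ equally directly.
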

\begin{proof}
If $x_*$ and $z_*$ are optimal, their midpoint is feasible and
\[
 f\bigl((x_*+z_*)/2\bigr)
 =\tfrac12 f(x_*)+\tfrac12 f(z_*)-\tfrac18\|B(x_*-z_*)\|^2.
\]
Optimality forces $Bx_*=Bz_*$. Multiplying by $B^\top$ proves equality
of their gradients. Coordinate optimality at a box minimizer gives
$g_{*,i}\geq0$ at zero, $g_{*,i}\leq0$ at $r_i$, and $g_{*,i}=0$
in the interior. Every optimizer therefore has $x_{*,i}=b_i$ for
$i\in J$. Expand the quadratic at $x_*$:
\[
 \Delta(x)=\tfrac12\|B(x-x_*)\|^2+g_*^\top(x-x_*).
\]
For a positive derivative the corresponding linear term is
$|g_{*,i}|x_i$; for a negative derivative it is
$|g_{*,i}|(r_i-x_i)$. This gives \eqref{eq:decomposition}. All its
terms are nonnegative, so its zero set is exactly \eqref{eq:optset}.
\end{proof}

Define the largest global quadratic-growth constant by
\begin{equation}
 \nu_*:=\inf_{x\in\cB\setminus X_*}
          \frac{2\Delta(x)}{\operatorname{dist}(x,X_*)^2}.
 \label{eq:nu}
\end{equation}
It is finite: $f$ is not constant on the full-dimensional box because
$H_{ii}=1$, so a nonoptimal feasible point gives a finite upper bound.
It is also positive, as the following argument shows.

Hoffman's bound \citep{hoffman1952} states that the distance to a
nonempty linear system is bounded by a finite constant times its
constraint residual. Applied relative to a polyhedron $P$, it yields
\[
 \operatorname{dist}(x,\{z\in P:Az=c\})\leq\Theta\|Ax-c\|
 \quad(x\in P).
\]
Indeed, in the full Hoffman bound the inequalities defining $P$ have
zero residual at such $x$. Positive row scalings of $A$ preserve
finiteness. Applying this result to \eqref{eq:optset}, with appropriate
row scalings, gives a finite $\Theta>0$ satisfying
\begin{equation}
 \operatorname{dist}(x,X_*)^2\leq\Theta^2
 \left(\tfrac12\|B(x-x_*)\|^2+
       \sum_{i\in J}\frac{|g_{*,i}|}{r_i}(x_i-b_i)^2\right).
 \label{eq:hoffman}
\end{equation}
Feasibility gives $(x_i-b_i)^2/r_i\leq|x_i-b_i|$. Thus the
parenthesis is at most \eqref{eq:decomposition}, proving
\begin{equation}
 \nu_*\geq2/\Theta^2>0.
 \label{eq:positive-growth}
\end{equation}
This is an existence argument; its linear system uses unknown optimal
data. Characterizations of Hoffman constants relative to a reference
polyhedron are developed by \citet{pena2021}. Our computable screening
construction avoids requiring that constant as input.

The simpler spectral bound $\nu_*\geq\lambda_{\min}(H)$ follows from
$\Delta(x)\geq\tfrac12(x-\bar x)^\top H(x-\bar x)$.
If $H\succ0$ and the optimizer is interior, equality holds: a sufficiently
small displacement from the optimizer in a minimum-eigenvalue direction
is feasible and has zero linear term in its gap.

\section{Specializations of the contraction theorem}
\label{app:specializations}

\subsection{Scalar growth and monotonicity}

If $0<\nu\leq\nu_*$, Theorem~\ref{thm:matrix} applies to $M=\nu I$.
At $\theta=1/2$,
$(H+(\nu/2)I)^{-1}\preceq(2/\nu)I$, so
\[
 A_{1/2}(\nu I)
 =2\lambda_{\max}\!\left(T(H+(\nu/2)I)^{-1}T^\top\right)
 \leq4\|T\|^2/\nu.
\]
This proves the scalar estimate in the main text for arbitrary solution sets.

For positive definite $G$, completing the square gives
\begin{equation}
 v^\top G^{-1}v=\max_{z\in\R^p}\{2v^\top z-z^\top Gz\}.
 \label{eq:inverse-variational}
\end{equation}
Consequently $G_1\succeq G_2\succ0$ implies
$G_1^{-1}\preceq G_2^{-1}$, by comparison inside the maximum.
If $M_1\succeq M_2$ and both are admissible in Theorem~\ref{thm:matrix},
apply this fact to $H+\theta M_1$ and $H+\theta M_2$, then take the
largest quadratic form on unit vectors after multiplying by $T$ and
$T^\top$. This proves
\begin{equation}
 A_\theta(M_1)\leq A_\theta(M_2).
 \label{eq:matrix-monotonicity}
\end{equation}
In particular, if $\nu=\lambda_{\min}(M_{\rm scr})>0$, the matrix
bound is no worse than the scalar replacement $\nu I$ for every
admissible $\theta$.

\subsection{Proof of Theorem~\ref{thm:pd}}
\label{app:pd}

Suppose $H\succ0$, so there is a unique optimizer $x_*$. The argument in the
proof of Theorem~\ref{thm:matrix} remains valid at $\theta=0$ with $G=H$ and
yields
\[
 \Delta(\mathcal C(x))
 \leq\tfrac12 A_0\|\mathcal C(x)-x\|^2.
\]
Combining this inequality with \eqref{eq:decrease} proves
\eqref{eq:pd-rate}. Now assume $x_*$ is interior, so $g(x_*)=0$. Each
unprojected intermediate update is a continuous function of the starting
point and fixes $x_*$. Since there are finitely many updates, all are interior
for starting points in a sufficiently small neighborhood of $x_*$.

For such a sweep let $s=\mathcal C(x)-x$ and $e=\mathcal C(x)-x_*$.
Every updated partial derivative is zero. Equations
\eqref{eq:chronology} and \eqref{eq:decrease} therefore hold as
\begin{equation}
 He=T^\top s,\qquad
 \Delta(\mathcal C(x))=\tfrac12s^\top TH^{-1}T^\top s,\qquad
 \Delta(x)-\Delta(\mathcal C(x))=\tfrac12\|s\|^2.
 \label{eq:free-sweep}
\end{equation}
The scalar update equations also give
\[
 s_i=-[H(x-x_*)]_i-\sum_{j<i}H_{ij}s_j,
 \quad\text{hence}\quad (I+T)s=-H(x-x_*).
\]
Both $H$ and $I+T$ are invertible. Choose any nonzero top eigenvector
$s$ of $TH^{-1}T^\top$ and scale it sufficiently small that
$x=x_*-H^{-1}(I+T)s$ generates a wholly interior sweep. Then
\eqref{eq:free-sweep} gives
\[
 \frac{\Delta(\mathcal C(x))}{\Delta(x)}
 =\frac{s^\top TH^{-1}T^\top s}
        {\|s\|^2+s^\top TH^{-1}T^\top s}
 =\frac{A_0}{1+A_0}.
\]
This proves equality, including $T=0$, when every ratio is zero.

For completeness, the identity connecting this expression with
\citet[Claim~B.1]{sunye2021} follows by setting $L=I+T$ and observing
$L=H-T^\top$ and $L^\top=H-T$. Thus
\[
 L^\top H^{-1}L=(H-T)H^{-1}(H-T^\top)
 =H-T-T^\top+TH^{-1}T^\top=I+TH^{-1}T^\top.
\]
This establishes equality of the \emph{one-sweep objective factors}. The
iterate-map spectral radius and a trajectory's asymptotic objective rate are
distinct quantities.

When $H\succ0$, the infimum of $A_\theta(M)$ is unchanged when $\theta=0$
is included. Indeed,
\[
 (H+\theta M)^{-1}-H^{-1}
 =-\theta(H+\theta M)^{-1}MH^{-1}\longrightarrow0,
\]
and the largest eigenvalue is continuous on symmetric matrices. Including
the endpoint explicitly is nevertheless useful in a finite parameter
search.

\subsection{Proof of Proposition~\ref{prop:error-bound} and Wang--Lin comparison}
\label{app:error-bound}

\begin{proof}[Proof of Proposition~\ref{prop:error-bound}]
We first prove \eqref{eq:eb-rate}. Let $y=\mathcal C(x)$ and retain the
sweep-step vector $s$ from the proof of Theorem~\ref{thm:matrix}.

Projection onto an interval is nonexpansive. Coordinate optimality also
gives $y_i=\Pi_{[0,r_i]}(y_i-g_i(x^i))$ at $y=\mathcal C(x)$.
Subtracting this equality from the residual definition at $y$ yields
\[
 |R_i(y)|\leq|g_i(y)-g_i(x^i)|=|(T^\top s)_i|,
 \qquad\|R(y)\|\leq\|T\|\|s\|.
\]
Drop the nonpositive curvature term in \eqref{eq:end-gap} and use the
assumed global residual error bound:
\begin{align*}
 \Delta(y)&\leq\|T\|\|s\|\operatorname{dist}(y,X_*)\\
 &\leq\kappa\|T\|\|s\|\|R(y)\|
 \leq\kappa\|T\|^2\|s\|^2.
\end{align*}
Combining with \eqref{eq:decrease} proves the first claim of
the proposition.

For the second claim, let $\nu>0$ be any quadratic-growth constant and
fix $x\in\cB$. Put $z=\Pi_{\cB}(x-g(x))$, $R=x-z$, and
$\bar z=\Pi_{X_*}(z)$. Projection optimality gives
$g(x)^\top(z-\bar z)\leq R^\top(z-\bar z)$. Since
$g(z)=g(x)-HR$, quadratic expansion gives
\begin{align*}
 \tfrac12(\nu+\lambda_{\min}(H))\|z-\bar z\|^2
 &\leq\Delta(z)+\tfrac12(z-\bar z)^\top H(z-\bar z)\\
 &=g(z)^\top(z-\bar z)\\
 &\leq ((I-H)R)^\top(z-\bar z)\\
 &\leq\|I-H\|\|R\|\|z-\bar z\|.
\end{align*}
If $z\notin X_*$, divide by $\|z-\bar z\|$; otherwise the resulting
distance bound is immediate. The triangle inequality now yields
\[
 \operatorname{dist}(x,X_*)
 \leq\|x-z\|+\operatorname{dist}(z,X_*)
 \leq\left(1+\frac{2\|I-H\|}
 {\nu+\lambda_{\min}(H)}\right)\|R(x)\|.
\]
Thus $\kappa_\nu$ in \eqref{eq:kappa-conversion} is a valid global
residual error-bound constant, and the final claim follows by substituting
it into \eqref{eq:eb-rate}.
\end{proof}

The preceding conversion is a quantitative refinement of the general
proximal-gradient implication in \citet[Corollary~3.6]{drusvyatskiylewis2018}.
Applied with unit step to the smooth quadratic plus the indicator of $\cB$,
their result gives the error-bound constant
\[
 \kappa_{\rm DL}=\left(1+\frac{2}{\nu}\right)(1+\|H\|).
\]
Because $H\succeq0$ has unit diagonal, $\|H\|\geq1$ and
$\|I-H\|\leq\|H\|$. Hence
\[
 \kappa_\nu
 \leq1+\frac{2\|H\|}{\nu}
 <\left(1+\frac{2}{\nu}\right)(1+\|H\|)
 =\kappa_{\rm DL}.
\]
Thus \eqref{eq:kappa-conversion} strictly sharpens that general conversion
for the normalized box quadratics considered here. The comparison concerns
the constants implied by the same quadratic-growth inequality; a sharper
problem-specific residual error bound may of course be available.

To compare constants in exactly the same normalization, put
$\rho=\|H\|$ and $\tau=\|T\|$. Theorem~8 of
\citet{wanglin2014} gives the factor $\phi/(\phi+\gamma)$, with
\[
 \phi=\left(\rho+\frac{1+\beta}{\omega}\right)
       \left(1+\kappa\frac{1+\beta}{\omega}\right).
\]
Their Lemma~10 supplies $\omega=1$, $\beta=1+\rho\sqrt p$, and
$\gamma=1/2$ for exact cyclic minimization of the unit-diagonal
quadratic. Hence their factor is
\begin{equation}
 \chi_{\rm WL}=\frac{A_{\rm WL}}{1+A_{\rm WL}},\qquad
 A_{\rm WL}=2(\rho+2+\rho\sqrt p)
                 (1+\kappa(2+\rho\sqrt p)).
 \label{eq:WL}
\end{equation}
The residual here is the same unit-step Euclidean projected residual
used in their error bound. Thus the same numerical $\kappa$ is being
used, rather than differently scaled geometric information.

Let $\|H\|_F=(\sum_{i,j}H_{ij}^2)^{1/2}$ denote the Frobenius norm.
Since $\tau\leq\|H\|_F\leq\sqrt p\,\rho$,
\[
 2\kappa\tau^2\leq2\kappa p\rho^2
 <2(\rho+2+\rho\sqrt p)(1+\kappa(2+\rho\sqrt p))=A_{\rm WL}.
\]
The strict inequality follows because each factor on the right exceeds
the corresponding factor $\rho\sqrt p$ and $\kappa\rho\sqrt p$.
Since $t/(1+t)$ is strictly increasing for $t\geq0$, the bound in
\eqref{eq:eb-rate} is strictly smaller than \eqref{eq:WL}.

\subsection{Why an error bound derived from screened growth is redundant}
\label{app:dominance}

Suppose $M_{\rm scr}\succ0$, and put
\[
 \nu=\lambda_{\min}(M_{\rm scr}),\quad
 h=\lambda_{\min}(H),\quad d=\|I-H\|,\quad
 \kappa_\nu=1+\frac{2d}{\nu+h}.
\]
Since $\nu_*\geq\nu$, this $\nu$ is a valid quadratic-growth constant, so
Proposition~\ref{prop:error-bound} makes $\kappa_\nu$ a valid error-bound
constant. However,
\begin{equation}
 \inf_{0<\theta<1}\tfrac12 A_\theta(M_{\rm scr})
 \leq\frac{2\nu\|T\|^2}{(\nu+h)^2}
 \leq\kappa_\nu\|T\|^2.
 \label{eq:redundancy}
\end{equation}
To show the first inequality, use \eqref{eq:matrix-monotonicity} and
$H+\theta\nu I\succeq(h+\theta\nu)I$:
\[
 \tfrac12 A_\theta(M_{\rm scr})
 \leq\frac{\|T\|^2}{2(1-\theta)(h+\theta\nu)}.
\]
Because $M_{\rm scr}\succeq H$, one has $\nu\geq h$. The denominator
is maximized at $\theta=(\nu-h)/(2\nu)\in[0,1/2]$, as seen from
\[
 (1-\theta)(h+\theta\nu)
 =\frac{(\nu+h)^2}{4\nu}
    -\nu\left(\theta-\frac{\nu-h}{2\nu}\right)^2.
\]
If this parameter is zero then $h=\nu>0$, so the endpoint is valid.
For the second inequality in \eqref{eq:redundancy}, unit diagonal and
positive semidefiniteness give $0\leq h\leq1$ and $d\geq1-h$.
Consequently
\[
 \kappa_\nu-\frac{2\nu}{(\nu+h)^2}
 \geq1+\frac{2(1-h)}{\nu+h}-\frac{2\nu}{(\nu+h)^2}
 =\frac{\nu^2+2h-h^2}{(\nu+h)^2}>0.
\]
The last inequality in \eqref{eq:redundancy} is strict if $T\ne0$.
For a finite parameter search, include the displayed minimizing
parameter or retain its explicit upper bound to guarantee dominance over the
converted error-bound estimate.

\subsection{Elementary estimates for the triangular norm}

With $\rho=\|H\|$, unit diagonal gives $\operatorname{tr}H=p$.
If $\lambda_1,\ldots,\lambda_p$ are the eigenvalues of $H$, then
\begin{equation}
 \|T\|^2\leq\|T\|_F^2
 =\frac{\|H\|_F^2-p}{2}
 =\frac{\sum_i\lambda_i^2-p}{2}
 \leq\frac{p(\rho-1)}2\leq\frac{p(p-1)}2.
 \label{eq:triangular-estimates}
\end{equation}
Here $\lambda_i^2\leq\rho\lambda_i$, $\sum_i\lambda_i=p$, and
$\rho\leq p$. Sharper cyclic-CD analyses use logarithmic
triangular-truncation estimates \citep{li2018,sunye2021}; the elementary
envelopes above serve only as direct dimension-dependent estimates.
Directly retaining $T$ in \eqref{eq:A} also preserves order information
lost by \eqref{eq:triangular-estimates}. If $T=0$, symmetry and unit
diagonal imply $H=I$, and one sweep solves the separable problem.

\section{Screening refinements and objective lower bounds}
\label{app:screening}

\subsection{Certified faces and gradient ranges}

Let $S$ be coordinates already certified to equal endpoints
$b_i\in\{0,r_i\}$ at every optimizer, and let
$F=\{1,\ldots,p\}\setminus S$. The face
\begin{equation}
 P(S,b)=\{x\in\cB:x_i=b_i\ (i\in S)\}
 \label{eq:face}
\end{equation}
contains $X_*$. Initially $S$ is empty. Since each gradient component
is affine, its exact range on this face is $[\ell_i^{\rm face},h_i^{\rm face}]$,
where
\begin{align}
 \ell_i^{\rm face}&=-a_i+\sum_{j\in S}H_{ij}b_j
                       +\sum_{\substack{j\in F\\H_{ij}<0}}H_{ij}r_j,
 \label{eq:face-lower}\\
 h_i^{\rm face}&=-a_i+\sum_{j\in S}H_{ij}b_j
                       +\sum_{\substack{j\in F\\H_{ij}>0}}H_{ij}r_j.
 \label{eq:face-upper}
\end{align}
Each free summand $H_{ij}x_j$ is minimized and maximized at the
appropriate endpoint. These intervals therefore contain $g_{*,i}$.
If $\ell_i^{\rm face}>0$, certify $b_i=0$ with margin
$m_i=\ell_i^{\rm face}$. If $h_i^{\rm face}<0$, certify $b_i=r_i$
with margin $m_i=-h_i^{\rm face}$. Restrict the face and repeat;
there are at most $p$ additions to $S$.

\subsection{An objective lower bound with an auxiliary vector}

For any $v\in\R^p$, define
\begin{equation}
 d_S(v)=-\tfrac12v^\top Hv+
       \sum_{i\in S}b_i(Hv-a)_i+
       \sum_{i\in F}r_i\min\{(Hv-a)_i,0\}.
 \label{eq:dual-bound}
\end{equation}
For every $u\in P(S,b)$, completing the square and minimizing the
remaining linear function over the face give
\begin{align}
 f(u)&=\tfrac12(u-v)^\top H(u-v)-\tfrac12v^\top Hv+(Hv-a)^\top u
 \notag\\
 &\geq d_S(v)+\tfrac12(u-v)^\top H(u-v).
 \label{eq:dual-identity}
\end{align}
In particular,
\begin{equation}
 d_S(v)\leq f_*,\qquad
 \tfrac12(x_*-v)^\top H(x_*-v)\leq f_*-d_S(v).
 \label{eq:dual-radius}
\end{equation}
The construction accepts any $v\in\R^p$ and uses only products with $H$.
It is a box-quadratic realization of the gap-region principle
underlying safe screening \citep{ndiaye2017}.

For a feasible point $z$, taking $v=z$ gives
\begin{equation}
 f(z)-d_S(z)=g(z)^\top z-\sum_{i\in S}b_i g_i(z)
                          -\sum_{i\in F}r_i\min\{g_i(z),0\}.
 \label{eq:face-gap}
\end{equation}
If $z\in P(S,b)$, the fixed terms cancel and this becomes the
Frank--Wolfe gap on the face,
$\sum_{i\in F}[z_i(g_i(z))_++(r_i-z_i)(-g_i(z))_+]$.
If $z$ has not reached the certified endpoints, the full expression
\eqref{eq:face-gap} must be used. Restricting the face can only improve
$d_S(v)$, even in this case.

\subsection{Safe intervals and their combination}

Suppose a certified scalar $\ell\leq f_*$ is available. For a feasible
$z$, put $\overline\Delta(z)=f(z)-\ell$. Assume previous screening has
supplied margins $0<m_i\leq|g_{*,i}|$ on $S$, with the correct endpoints.
The budget
\begin{equation}
 \beta(z)=\overline\Delta(z)-\sum_{i\in S}m_i|z_i-b_i|
 \label{eq:beta}
\end{equation}
is nonnegative by \eqref{eq:decomposition}, and
$(z-x_*)^\top H(z-x_*)\leq2\beta(z)$. The same
Cauchy--Schwarz argument as in Section~\ref{sec:fw} gives
\begin{equation}
 g_{*,i}\in[g_i(z)-\sqrt{2\beta(z)},\ g_i(z)+\sqrt{2\beta(z)}].
 \label{eq:primal-interval}
\end{equation}
Known boundary penalties thus reduce the uncertainty radius.

An auxiliary vector provides a differently centered interval. For a
feasible incumbent $x$ and any $v$, let $E(x,v)=f(x)-d_S(v)\geq0$.
Equation~\eqref{eq:dual-radius} yields
\begin{equation}
 g_{*,i}\in[(Hv-a)_i-\sqrt{2E(x,v)},\ (Hv-a)_i+\sqrt{2E(x,v)}].
 \label{eq:aux-interval}
\end{equation}
Only $d_S(v)$ supports the radius in \eqref{eq:dual-radius} for this center;
a larger unrelated lower bound lacks the required centered inequality.

Intersect \eqref{eq:face-lower}--\eqref{eq:face-upper},
\eqref{eq:primal-interval}, \eqref{eq:aux-interval}, the Frank--Wolfe
interval from Section~\ref{sec:fw}, and any previously retained intervals.
If the intersection is $[L_i,U_i]$, then $L_i>0$ certifies endpoint
zero and margin $L_i$, while $U_i<0$ certifies endpoint $r_i$ and
margin $-U_i$. Otherwise retain margin zero unless an older certificate
already supplies a positive one. Correct certificates cannot disagree
on the sign. Using previously certified margins to form \eqref{eq:beta}
before generating new ones avoids circular reasoning.

One may retain a finite archive $\mathcal Z$ of feasible reference
points and use the maximum lower endpoint and minimum upper endpoint
over that archive. If a point is discarded, retain its certified
interval. On each pass, restrict the face after any new endpoint
certification, improve $\ell$, recompute the intervals, and retain the
best margins. Validity holds after any finite number of passes.

\subsection{Improving the auxiliary lower bound}

The function $d_S$ is concave and piecewise quadratic. Given an old
auxiliary vector $v_0$ and feasible incumbent $x$, a concrete improvement is
\begin{equation}
 v^+=v_0+t_*(x-v_0),\qquad
 t_*\in\argmax_{0\leq t\leq1}d_S(v_0+t(x-v_0)).
 \label{eq:segment}
\end{equation}
The segment includes both previous and current choices. Retain
$\ell\leftarrow\max\{\ell,d_S(v^+)\}$; initially
$\ell=d_{\varnothing}(0)=\sum_i r_i\min\{-a_i,0\}$.
For SVM-scaled data this is $-\sum_i r_i a_i$.

To solve \eqref{eq:segment}, put $d=x-v_0$ and $h=Hd$. The only
breakpoints are zeros of $(Hv_0-a)_i+t h_i$ for $i\in F$.
Between consecutive breakpoints let $I_-$ be the free coordinates with
negative gradient. The derivative of the objective along the segment is
\[
 -v_0^\top Hd+\sum_{i\in S}b_i h_i+
                    \sum_{i\in I_-}r_i h_i-t\,d^\top Hd.
\]
Evaluate the endpoints and its zero, if that zero lies in the interval
and $d^\top Hd>0$. If the curvature is zero, endpoints suffice.
After sorting breakpoints, crossing one changes only the corresponding
summands. With endpoint matrix products cached, the total work is
$O(p\log p)$. Tied breakpoints can be processed together.

Further tightening is possible by intersecting $P(S,b)$ with all the
regions
\[
 \tfrac12(u-z)^\top H(u-z)\leq\beta(z)\ (z\in\mathcal Z),\qquad
 \tfrac12(u-v)^\top H(u-v)\leq E(x,v).
\]
This nonempty compact convex intersection contains $X_*$. Minimizing
$g_i(u)$ and $-g_i(u)$ on it gives the strongest gradient interval
implied by this particular intersection. These optional convex refinements
can tighten the explicit interval procedure.
Certified outer bounds on their optima are needed if numerical solves
are used for safe screening.

\subsection{Monotonicity, limiting margins, and sublevel sets}

The interval retention rule makes each $m_i$ nondecreasing. Since
$m_i\leq|g_{*,i}|$,
\begin{equation}
 H\preceq M_{\rm scr}\preceq
 M_*:=H+2\diag(|g_{*,1}|/r_1,\ldots,|g_{*,p}|/r_p).
 \label{eq:limit-matrix}
\end{equation}
Whenever the inverse formula is available, \eqref{eq:matrix-monotonicity}
shows that each $A_\theta(M_{\rm scr})$, and its infimum over
$\theta$, is nonincreasing.

If the archive includes feasible $z^k$ with
$\overline\Delta(z^k)\to0$, then
$|g_i(z^k)-g_{*,i}|\leq\sqrt{2\overline\Delta(z^k)}$.
Both endpoints of the unrefined interval converge to $g_{*,i}$.
A nonzero optimal derivative is therefore certified after finitely
many rounds, with its retained margin converging to its absolute value.
A zero derivative cannot pass a strict safe sign test. It follows that
$M_{\rm scr}\to M_*$. The coefficient $|g_{*,i}|/r_i$ is the largest
constant $w$ for which $|g_{*,i}|t\geq wt^2$ throughout
$0\leq t\leq r_i$, by taking $t=r_i$. Thus $M_*$ characterizes the limit
of this coordinatewise construction; other growth constructions may yield
larger matrices.

If $M_{\rm scr}$ is singular, any zero direction has zero quadratic
form under $H$ as well, since all terms in
$v^\top M_{\rm scr}v=v^\top Hv+2\sum_i(m_i/r_i)v_i^2$ are
nonnegative. Thus $H+\theta M_{\rm scr}$ is singular for every
$\theta$. A separately known positive scalar growth constant can still
be used, as proved in Appendix~\ref{app:growth}.

For a trajectory with a known feasible incumbent $x$, suppose
$B_0=f(x)-\ell>0$. On the sublevel set $f(u)\leq f(x)$,
\eqref{eq:decomposition} gives
$m_i|u_i-b_i|\leq\Delta(u)\leq B_0$ for each positive margin.
Hence $|u_i-b_i|\leq\min\{r_i,B_0/m_i\}$. Define
\begin{equation}
 M_{\rm lev}=H+2\diag(w_1,\ldots,w_p),\qquad
 w_i=\begin{cases}
 m_i/\min\{r_i,B_0/m_i\},&m_i>0,\\0,&m_i=0.
 \end{cases}
 \label{eq:level-matrix}
\end{equation}
Repeating \eqref{eq:screen-growth} gives
$\Delta(u)\geq\tfrac12(u-x_*)^\top M_{\rm lev}(u-x_*)$ on this
sublevel set. Since $M_{\rm lev}\succeq M_{\rm scr}$, it can improve
the bound for all subsequent sweeps. Theorem~\ref{thm:matrix} uses
growth only at the sweep endpoint, which stays in this set by descent.
The resulting guarantee is a trajectory-sublevel bound rather than a global
bound for the supremum \eqref{eq:chi}. If $B_0=0$, the incumbent is already
optimal.

\subsection{A singular example requiring gap information}

Take
\[
 H=\begin{bmatrix}1&1\\1&1\end{bmatrix},\qquad
 a=(1,2)^\top,\qquad (r_1,r_2)=(2,1).
\]
These satisfy $a_i r_i=2$. Full-box gradient ranges are $[-1,2]$ and
$[-2,1]$, so face screening alone certifies no endpoint. Let
$x=(1/4,1)^\top$. Then $f(x)=-47/32$ and $g(x)=(1/4,-3/4)^\top$.
Writing $\sigma=v_1+v_2$, the full-box lower bound is
\[
 d_{\varnothing}(v)=-\tfrac12\sigma^2+
                          2\min\{\sigma-1,0\}+\min\{\sigma-2,0\}.
\]
On the segment from zero to $x$, $0\leq\sigma\leq5/4$. Its derivative
is $3-\sigma>0$ below one and $1-\sigma\leq0$ above one, so the
maximum is at $v=(1/5,4/5)^\top$ with lower bound $-3/2$.
Now $E(x,v)=1/32$, $Hv-a=(0,-1)^\top$, and the auxiliary radius is
$1/4$. This certifies $g_{*,2}\leq-3/4$ and gives
\[
 M_{\rm scr}=\begin{bmatrix}1&1\\1&5/2\end{bmatrix}\succ0.
\]
The feasible point $(0,1)^\top$ has objective $-3/2$, confirming that
the lower bound is exact. Centering \eqref{eq:primal-interval} at
$x$ with the same lower bound gives the weaker margin $m_2=1/2$;
retaining their intersection keeps the stronger one.

\section{Exact two-coordinate calculations}
\label{app:singular}

\subsection{Proof of Theorem~\ref{thm:singular}}

For the data \eqref{eq:family}, completing the square gives
\[
 f(x)=\tfrac12(x_1+x_2-1)^2-\tfrac12-\delta x_2.
\]
Since $x_2\leq r$, the minimum is at least $-1/2-\delta r$.
Equality requires both $x_2=r$ and $x_1+x_2=1$, giving the unique
feasible optimizer $(1-r,r)^\top$ and gradient $(0,-\delta)^\top$.
This also proves the gap formula in \eqref{eq:family-map}.

For any starting point $x$, the first coordinate update is $y_1=1-x_2$,
which lies in $[0,1]$ because $0\leq x_2\leq r<1$. The second is
$y_2=\min\{1+\delta-y_1,r\}=\min\{x_2+\delta,r\}$.
Write
\[
 t=r-x_2,\qquad h=\min\{\delta,t\},\qquad b=1-x_2-x_1.
\]
The displacement is $(b,h)^\top$, and direct substitution gives
\begin{equation}
 \Delta(x)=\tfrac12b^2+\delta t,\qquad
 \Delta(y)=\tfrac12h^2+\delta(t-h),\qquad
 \|y-x\|^2=b^2+h^2.
 \label{eq:family-cycle}
\end{equation}
If $t=0$, the endpoint is optimal. For fixed $t>0$, the endpoint gap
is independent of $b$, so the ratio $\Delta(y)/\Delta(x)$ is largest
at $b=0$. This choice is feasible for every $t\in[0,r]$, because
$x_1=1-x_2\in[1-r,1]$. Therefore
\begin{equation}
 \max_{x_1}\frac{\Delta(\mathcal C(x))}{\Delta(x)}=
 \begin{cases}
 t/(2\delta),&0<t\leq\delta,\\
 1-\delta/(2t),&\delta\leq t\leq r.
 \end{cases}
 \label{eq:family-profile}
\end{equation}
The two branches agree at $t=\delta$ and are nondecreasing. Since
$r>\delta$, the maximum occurs at $t=r$, $b=0$, namely
$x=(1,0)^\top$. Its value is $1-\delta/(2r)$.

For the trajectory, let $x^{(k)}=\mathcal C^k((1,0)^\top)$ and
$N=\lceil r/\delta\rceil$. Parenthesized superscripts here count
whole sweeps. Iterating \eqref{eq:family-map} gives
\begin{equation}
 x_2^{(k)}=\min\{k\delta,r\},\qquad
 x_1^{(k)}=1-\min\{(k-1)\delta,r\},\qquad k\geq1.
 \label{eq:trajectory}
\end{equation}
For $1\leq k<N$ the coordinate sum is $1+\delta$, and hence
\begin{equation}
 \Delta(x^{(k)})=\delta r-(k-\tfrac12)\delta^2.
 \label{eq:trajectory-gap}
\end{equation}
At $k=N$, put $h_N=r-(N-1)\delta\in(0,\delta]$. Then
$x_2^{(N)}=r$ but $x_1^{(N)}=1-r+h_N$, so
$\Delta(x^{(N)})=h_N^2/2>0$. The next first-coordinate update reaches
the optimizer. No earlier iterate is optimal: before sweep $N$ the
second coordinate is below $r$, and at sweep $N$ the first is too large.
Thus exactly $N+1$ sweeps are required. For
$1\leq k\leq\lfloor r/(2\delta)\rfloor<N$,
\eqref{eq:trajectory-gap} gives
$\Delta(x^{(k)})>\delta r/2=\Delta(x^{(0)})/2$.

For the spectral bound, abbreviate $\nu=\delta/r$ within this
calculation. The exact margin gives
\[
 M_{\rm scr}=\begin{bmatrix}1&1\\1&1+2\nu\end{bmatrix},\qquad
 H+\theta M_{\rm scr}=
 \begin{bmatrix}1+\theta&1+\theta\\
                 1+\theta&1+\theta+2\theta\nu\end{bmatrix}.
\]
The latter determinant is $2(1+\theta)\theta\nu>0$ and the first
diagonal entry of its inverse is
$1/(1+\theta)+1/(2\theta\nu)$. Since
$T=\bigl[\begin{smallmatrix}0&0\\1&0\end{smallmatrix}\bigr]$,
this entry is the only nonzero eigenvalue of
$T(H+\theta M_{\rm scr})^{-1}T^\top$. Therefore
\begin{equation}
 A_\theta(M_{\rm scr})=
 \frac1{1-\theta^2}+\frac1{2\nu\theta(1-\theta)}.
 \label{eq:family-A}
\end{equation}
At $\theta=1/2$ it is $4/3+2/\nu$. Substituting into
\eqref{eq:rate} gives $1-3\nu/(6+7\nu)$, exactly
\eqref{eq:family-bound}. Finally,
\[
 \frac{3\nu/(6+7\nu)}{1-\chi_*}
 =\frac{3\nu/(6+7\nu)}{\nu/2}
 =\frac6{6+7\nu}\longrightarrow1.
\]
This proves all statements of Theorem~\ref{thm:singular}.

\subsection{Exact growth and gap per squared displacement}

The same family permits two additional sharp calculations. First,
$\nu_*=\delta/r$ in \eqref{eq:nu}. To prove this, put
$\nu=\delta/r$, $d_1=x_1-(1-r)$, and $t=r-x_2$. Feasibility gives
$t\geq0$ and $d_1\leq r$. Expanding shows
\begin{equation}
 2\Delta(x)-\nu\|x-x_*\|^2
 =(1-\nu)(d_1-t)^2+2\nu t(r-d_1)\geq0.
 \label{eq:family-growth}
\end{equation}
Here $0<\nu<1$. At $x=(1,0)^\top$, $d_1=t=r$, so equality
holds at a nonoptimal point and the constant is exact.

Second, define the gap per squared sweep displacement by
\begin{equation}
 \Gamma:=\sup_{\substack{x\in\cB\\\mathcal C(x)\ne x}}
      \frac{\Delta(\mathcal C(x))}{\|\mathcal C(x)-x\|^2}.
 \label{eq:Gamma}
\end{equation}
A fixed point of $\mathcal C$ is optimal: each coordinate changes
only once, so zero total displacement means that every coordinate is
optimal at the same point. Thus this definition excludes exactly the
zero-displacement optimal sweeps. Equation~\eqref{eq:decrease} implies
$\chi_*\leq2\Gamma/(1+2\Gamma)$ whenever $\Gamma<\infty$.
Theorem~\ref{thm:matrix} gives
$\Gamma\leq\inf_{0<\theta<1}A_\theta(M)/2$.

Using \eqref{eq:family-cycle}, for fixed $t>0$ the ratio defining
$\Gamma$ is again maximal at $b=0$. It equals $1/2$ when
$t\leq\delta$, and $t/\delta-1/2$ when $t\geq\delta$.
Consequently
\begin{equation}
 \Gamma=\frac r\delta-\frac12=\frac1{\nu_*}-\frac12,\qquad
 \chi_*=\frac{2\Gamma}{1+2\Gamma}.
 \label{eq:family-Gamma}
\end{equation}
Equation~\eqref{eq:family-A} also shows
\begin{equation}
 \Gamma\leq\tfrac12\inf_{0<\theta<1}A_\theta(M_{\rm scr})
 \leq\tfrac12 A_{1/2}(M_{\rm scr})
 =\frac1{\nu_*}+\frac23=\Gamma+\frac76.
 \label{eq:Gamma-sharpness}
\end{equation}
Thus the bound has the correct leading constant for the diverging
gap/displacement ratio as well as for the vanishing decrease fraction.

\subsection{Why a pseudoinverse does not repair the Hessian-only formula}

The Moore--Penrose pseudoinverse of a positive semidefinite matrix
reciprocates its positive eigenvalues and is zero on its null space.
For \eqref{eq:family}, $H^\dagger=H/4$, so
$\lambda_{\max}(TH^\dagger T^\top)=1/4$.
Replacing $H^{-1}$ by $H^\dagger$ in \eqref{eq:pd} would predict
$\chi_*\leq1/5$, contradicting \eqref{eq:family-exact}.

The failed proof step is square completion over the whole vector space.
At the maximizing start the displacement is $s=(0,\delta)^\top$,
so $T^\top s=(\delta,0)^\top$. Along $e=t(1,-1)^\top$,
\[
 s^\top Te-\tfrac12e^\top He=\delta t,
\]
which is unbounded above as a function of $t\in\R$. A pseudoinverse
cannot represent this maximum because the linear term has a component
in the Hessian null space. The additional boundary term in
$M_{\rm scr}$ controls precisely this direction.

\subsection{An interior positive-definite comparison}

For $0\leq\eta<1$, consider
\[
 H_\eta=\begin{bmatrix}1&\eta\\\eta&1\end{bmatrix},\qquad
 a=\bm1,\qquad \cB=[0,C]^2,\qquad C>\frac1{1+\eta}.
\]
The optimizer is $x_*=\bm1/(1+\eta)$, and
$\nu_*=1-\eta$ by the interior minimum-eigenvalue argument in
Appendix~\ref{app:growth}. The exact residual error-bound constant is
$\kappa_*=1/(1-\eta)$. Indeed, for
$z=\Pi_{\cB}(x-g(x))$, nonexpansiveness and feasibility of $x_*$ give
\[
 \|z-x_*\|\leq\|(I-H_\eta)(x-x_*)\|=\eta\|x-x_*\|.
\]
The triangle inequality then gives
$(1-\eta)\|x-x_*\|\leq\|x-z\|=\|R(x)\|$.
For sufficiently small displacements $x-x_*=t(1,-1)^\top$,
projection is inactive and $R(x)=(1-\eta)(x-x_*)$, proving sharpness.

The error-bound specialization \eqref{eq:eb-rate}, even with this exact
constant, gives $2\eta^2/(1-\eta+2\eta^2)$. In contrast,
\[
 TH_\eta^{-1}T^\top=
 \begin{bmatrix}0&0\\0&\eta^2/(1-\eta^2)\end{bmatrix},
 \qquad\chi_*=\eta^2
\]
by Appendix~\ref{app:pd}. At $\eta=0.9$, these factors are approximately
$0.942$ and $0.81$. Thus separating interaction and geometric constants
can lose information even when each scalar constant is sharp.

\section{Witness search and numerical verification}
\label{app:witness}

\subsection{A feasible reference and stable objective differences}

The lower bound in \eqref{eq:bracket} uses a feasible reference $z$,
with $f_*\leq f(z)$. For starts satisfying $f(\mathcal C(x))>f(z)$,
write
\begin{equation}
 q_z(x)=\frac{f(\mathcal C(x))-f(z)}{f(x)-f(z)}.
 \label{eq:witness-ratio}
\end{equation}
For $A=f(\mathcal C(x))$ and $B=f(x)\geq A$, the ratio
$(A-t)/(B-t)$ is nonincreasing for $t<A$.
Since $f_*\leq f(z)<A$, this proves $0<q_z(x)\leq\chi_*$.
For a fixed start with positive endpoint gap,
improving $z$ toward optimality increases this lower estimate toward
the true ratio, once its numerator is positive.

To reduce cancellation, evaluate both objective differences with
\begin{equation}
 f(u)-f(z)=g(z)^\top(u-z)+\tfrac12(u-z)^\top H(u-z).
 \label{eq:stable-gap}
\end{equation}
The linear term must be retained even if the reference is close to an
optimizer, and also at an exact boundary optimizer.

\subsection{Forward and reverse evaluation of a sweep}

Maintain the current iterate $u$, gradient $g=Hu-a$, and objective
$F=f(u)$. At coordinate $i$, compute
\[
 v_i=u_i-g_i,\qquad u_i^+=\Pi_{[0,r_i]}(v_i),\qquad
 s_i=u_i^+-u_i.
\]
Using the pre-update values, replace
$F\leftarrow F+g_i s_i+s_i^2/2$, then update $u_i$ and
$g\leftarrow g+s_iH_{\cdot i}$, where $H_{\cdot i}$ is column $i$.
This is exact in real arithmetic and costs $O(p^2)$ for a dense sweep.

Suppose no pre-clipping argument $v_i$ equals a box endpoint. Define
$d_i=1$ if $0<v_i<r_i$ and $d_i=0$ otherwise. These strict
inequalities persist near the starting point, so the sweep is affine
on that neighborhood. Because $H_{ii}=1$,
$v_i=a_i-\sum_{j\ne i}H_{ij}u_j$ does not depend on the old $u_i$.
The derivative matrix $D_i$ of update $i$ has identity rows except for
row $i$, where
\[
 (D_i)_{ii}=0,\qquad (D_i)_{ij}=-d_iH_{ij}\quad(j\ne i).
\]
The derivative of the full sweep is $J=D_p\cdots D_1$.
To compute $J^\top w$, process $i=p,p-1,\ldots,1$. At each step
save $c=w_i$, set $w_j\leftarrow w_j-d_iH_{ij}c$ for $j\ne i$,
and set $w_i\leftarrow0$. This implements multiplication by
$D_i^\top$ in the required reverse order.

Starting with $w=g(\mathcal C(x))$, the final vector is
$b=J^\top g(\mathcal C(x))$. Holding $z$ fixed, the quotient rule
then gives
\begin{equation}
 \nabla q_z(x)=
 \frac{(f(x)-f(z))b-(f(\mathcal C(x))-f(z))g(x)}{(f(x)-f(z))^2}.
 \label{eq:witness-gradient}
\end{equation}
The reverse pass costs $O(p^2)$ and requires only vectors and clipping
indicators beyond the stored Hessian. At clipping thresholds,
\eqref{eq:witness-gradient} need not be a gradient. A search can
perturb to a neighboring differentiable region, but every proposed step
must be evaluated using the actual clipped sweep. Derivatives from one
region must not be extended across a threshold without reevaluation.

\subsection{Candidate starts and local improvement}

Useful starts include uniform points in the box and points concentrated
near its endpoints, together with their next few cyclic iterates.
An additional heuristic uses coordinates of the reference $z$ that
are close to an endpoint: rank them by increasing $|g_i(z)|$, move a
selected coordinate to its opposite endpoint, and approximately minimize
over the others while keeping that coordinate fixed. A small derivative
suggests a weak boundary penalty. Use this rule only for initialization;
activity certification still comes from safe intervals. For the singular
family, using its exact
optimizer as reference and moving the second coordinate to zero recovers
the worst start $(1,0)^\top$ after minimizing the first coordinate.

If $H\succ0$ and $z$ is interior, the equality proof in
Appendix~\ref{app:pd} suggests starting in directions
$-H^{-1}(I+T)s$, where $s$ is a top eigenvector of $TH^{-1}T^\top$.
Sample feasible distances in both signs. Similar directions from a
principal submatrix on approximately interior coordinates may be useful.
Treat every such point as a start and score it with the full clipped map.
Box-constrained local optimization of $q_z$ can then improve the best
candidates using \eqref{eq:witness-gradient} where it is valid.
Retain the best evaluated feasible point, including the initial point
if local improvement fails.

Searching may exclude very small denominators for numerical stability.
Such exclusions can reduce tightness; every retained, correctly evaluated
witness still gives a valid lower bound. A zero search result is merely the
trivial lower bound because finite local search may miss the global maximum
of \eqref{eq:witness-ratio}.

\subsection{Verifying the two endpoints of the contraction interval}

For a rigorous upper bound, objective lower bounds must be enclosed
downward, gap bounds upward, and gradient intervals outward. A strictly
positive lower endpoint or strictly negative upper endpoint is needed
to certify a margin. A negative numerical value for the theoretically
nonnegative budgets $\beta(z)$ or $E(x,v)$ signals enclosure error and
requires correction. Verify positive definiteness and enclose the largest
eigenvalue in \eqref{eq:A} upward. Use a verified upper enclosure rather
than a raw eigenvalue estimate from an unconverged iteration. Any verified
admissible value of $\theta$ supplies a valid bound.

For the lower bound, first verify feasibility of the retained start
$x$ and reference $z$. Obtain enclosures
\[
 F_U\geq f(z),\qquad A_L\leq f(\mathcal C(x)),\qquad B_U\geq f(x).
\]
The endpoint enclosure must propagate error through the \emph{whole sweep}
before the final objective is evaluated. One option
is to propagate intervals through the updates and then evaluate the
quadratic on the resulting box. Scalar clipping is monotone, so an
input interval $[v_L,v_U]$ maps into
$[\Pi_{[0,r_i]}(v_L),\Pi_{[0,r_i]}(v_U)]$ even across thresholds.

If $A_L>F_U$, then
\begin{equation}
 \frac{A_L-F_U}{B_U-F_U}
 \leq\frac{f(\mathcal C(x))-F_U}{f(x)-F_U}
 \leq\frac{f(\mathcal C(x))-f_*}{f(x)-f_*}\leq\chi_*.
 \label{eq:verified-witness}
\end{equation}
The first inequality reduces a positive numerator and increases a
positive denominator. The second uses the ratio's monotonicity in
the subtracted scalar and $F_U\geq f(z)\geq f_*$.
Report a downward enclosure of the left side, including rounding in
both subtractions and the division. If the strict test fails, refine
the enclosures or use another witness; zero remains valid.

\subsection{Computational costs and available checks}

For dense $H$, a forward sweep and a reverse pass each cost $O(p^2)$,
with $O(p)$ working storage beyond the Hessian. If $K$ starts are
evaluated and local refinement uses $N$ trial evaluations in total,
the evaluation cost is $O((K+N)p^2)$. Obtaining the reference,
constructing relaxed starts, and spectral factorizations are additional
costs. Dense spectral calculations can cost $O(p^3)$ with $O(p^2)$
storage. Sparse forward and reverse passes cost $O(p+\operatorname{nnz}(H))$,
where $\operatorname{nnz}(H)$ is the number of nonzero Hessian entries.

Face-gradient ranges require $O(p^2)$ initialization. Adding a newly
certified coordinate changes one term in every interval, so all face
updates together cost $O(p^2)$. With $q$ archived points and cached
gradients, each gap-interval pass costs $O(pq)$; the segment optimization
adds $O(p\log p)$ with cached products. These repeated passes,
positive-definiteness checks, and optional quadratic-region solves are
separate costs; cache and reuse their results while the inputs stay unchanged.

Deterministic numerical checks cover lower bounds, safe intervals,
matrix growth, and cycle estimates, including singular and nonunique
examples. They also check
the sharp singular family and its trajectories with exact rational
arithmetic. Full certification of the implementation requires interval
enclosures in addition to these floating-point checks.
\section{Experimental protocol and additional results}
\label{app:experiments}

\subsection{Data and computation}

The experiments were run with Python 3.12.5 on macOS 15.6.1.
Table~\ref{tab:datasets} lists the six scaled binary classification
datasets from the LIBSVM collection.
The labels are mapped to $\{-1,1\}$; no train--test split is made,
because the quantity of interest is the contraction of the training
objective. The linear kernel is $K(x,x')=x^\top x'$ and the RBF kernel is
$K(x,x')=\exp(-\gamma\|x-x'\|^2)$. The five RBF values of $\gamma$
are multiples of $\gamma_0$ defined in Section~\ref{sec:experiments}.
For each kernel matrix $Q_{ij}=y_i y_j K(x_i,x_j)$, we use the diagonal
change of variables from Appendix~\ref{app:scaling} and perform exact
clipped coordinate updates in the file order. Thus $p$ is the number of
examples, and one sweep contains $p$ updates.

\begin{table}[t]
\centering
\begin{tabular}{lrrr}
\toprule
Dataset & Examples $p$ & Features & $\gamma_0$ \\
\midrule
Sonar & 208 & 60 & $0.0526$ \\
Heart & 270 & 13 & $0.0857$ \\
Australian & 690 & 14 & $0.1007$ \\
Diabetes & 768 & 8 & $0.7202$ \\
Fourclass & 862 & 2 & $1.0636$ \\
German.numer & 1000 & 24 & $0.0481$ \\
\bottomrule
\end{tabular}
\caption{Dataset dimensions and median-distance RBF scale. The value
$\gamma_0$ is computed from all nonzero off-diagonal squared distances
within each dataset.}
\label{tab:datasets}
\end{table}

For each configuration, cyclic CD first supplies a feasible reference,
stopping at a projected-gradient residual of $10^{-8}$ or at 20,000
sweeps. When needed, the implementation refines this point with a
box-constrained optimizer and further sweeps. The screened matrix uses
the larger of two valid margins for each coordinate: iterative
face-gradient intervals from Appendix~\ref{app:screening} and the
Frank--Wolfe-gap margin \eqref{eq:fw-margin} evaluated at the reference.
The scalar $\theta$ in \eqref{eq:A} is optimized numerically. For the
Wang--Lin comparison, the implementation uses
$\nu=0.99\lambda_{\min}(M_{\rm scr})$ and the valid, conservative
error-bound constant $\kappa=1+2\|I-H\|/\nu$ in \eqref{eq:WL}.
When $H$ is numerically positive definite, the implementation also
compares the spectral result with \eqref{eq:pd-rate} and retains the
stronger factor.

The observed lower estimate of $\chi_*$ maximizes a one-sweep
objective-gap ratio over starts sampled uniformly in the box, near its
endpoints, and around a reference solution. The search also includes
iterates of sampled starts, weak-boundary perturbations, and spectral
directions when applicable. It refines promising starts with local
box-constrained optimization and reevaluates the actual clipped sweep.
The largest ratio is reported as $\chi_{\rm obs}$. Its exact-arithmetic
lower-bound interpretation requires only a feasible reference, as in
\eqref{eq:bracket}. The finite local search need not find the global
maximum, and floating-point evaluation is not an interval-verified
lower bound. Likewise, the reported
spectral factors use floating-point eigenvalues and screening margins.

A dash or crossed heatmap cell means that a required numerical test did
not succeed. In particular, the code declines to report
$\chi_{\rm scr}$ when the computed smallest eigenvalue of
$M_{\rm scr}$ is below its positive-definiteness tolerance; the
Wang--Lin value derived from this matrix is then also unavailable.
The Gauss--Seidel comparison is omitted when $H$ fails its numerical
positive-definiteness test. This includes every linear-kernel case in
Table~\ref{tab:linear-main} and Table~\ref{tab:linear-app}.

\subsection{Additional RBF results}

Figure~\ref{fig:rbf-app} reports the remaining RBF grids. The
screened factor is available for all 25 diabetes and all 25
german.numer configurations. For fourclass it is unavailable in four
configurations, all with $C\geq10$ and $\gamma\leq\gamma_0$.
The visible distance between the two plotted values persists on
these datasets. Across the 71 appendix RBF configurations with both
screening and Wang--Lin values, the screened decrease fraction is
larger in every case; its median ratio to Wang--Lin is
$3.8\times10^8$. The corresponding median ratio to Gauss--Seidel
over 35 available comparisons is $50.3$.

\begin{figure}[t]
\centering
\includegraphics[width=\linewidth]{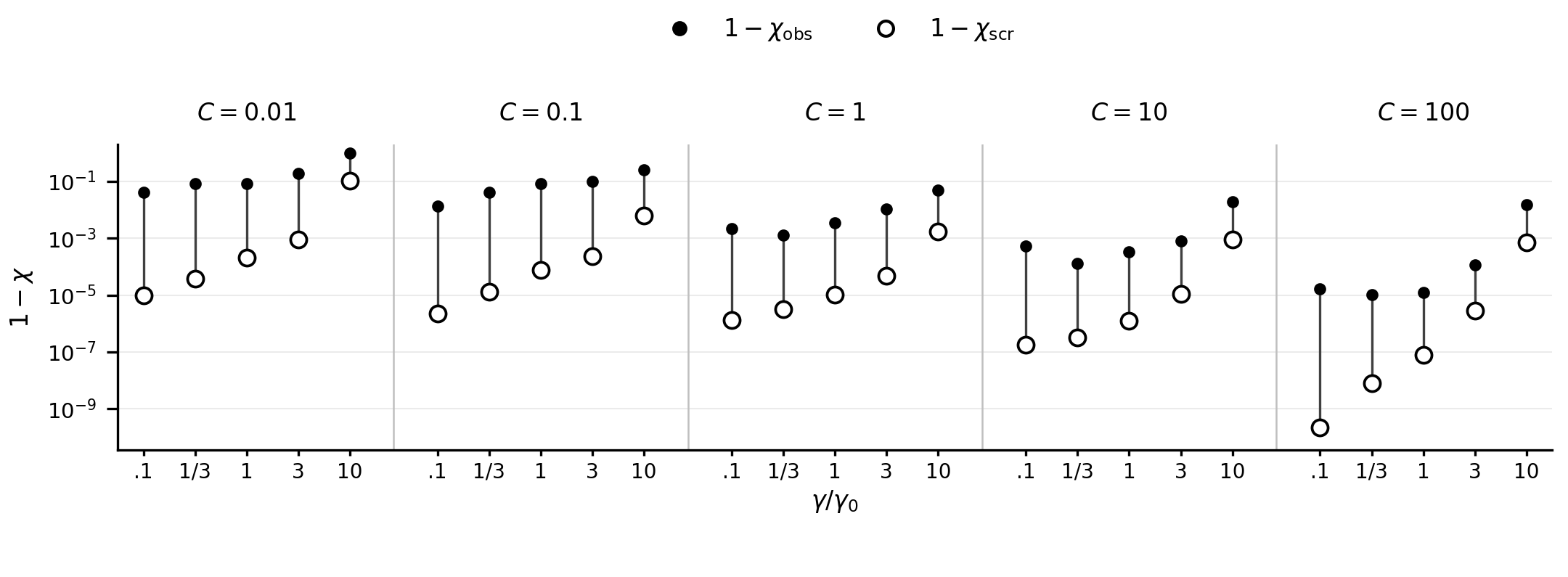}
\includegraphics[width=\linewidth]{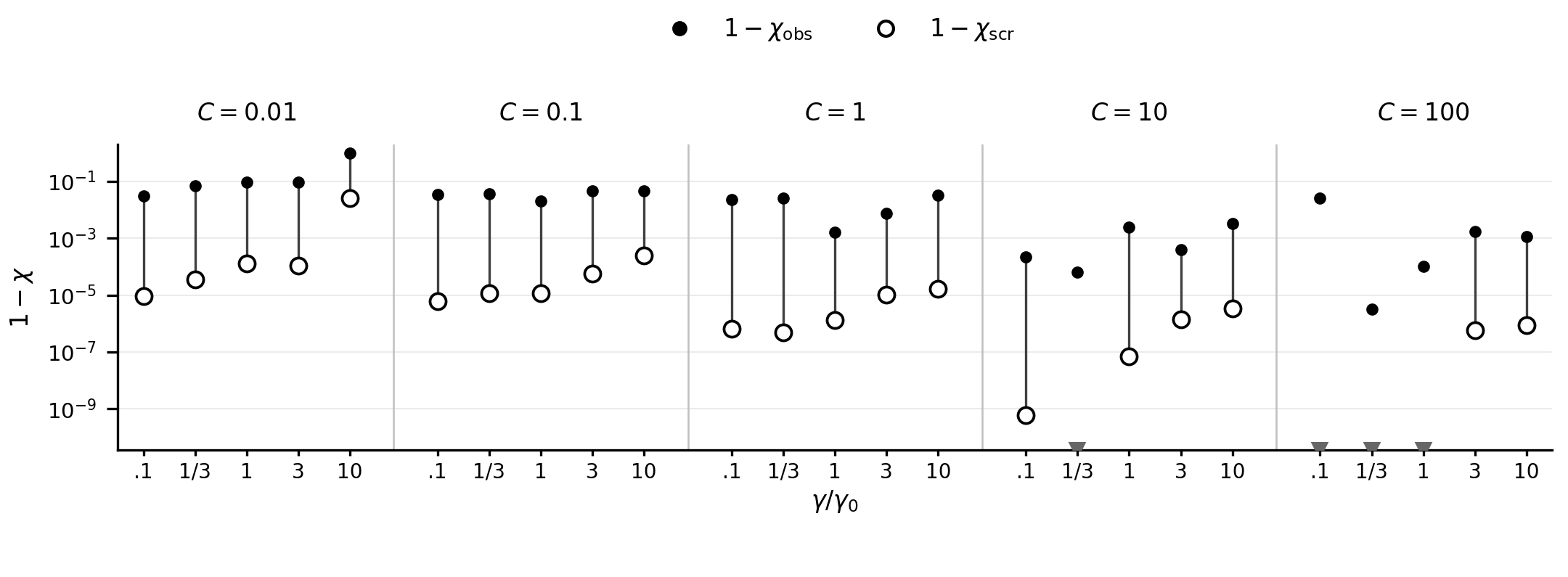}
\includegraphics[width=\linewidth]{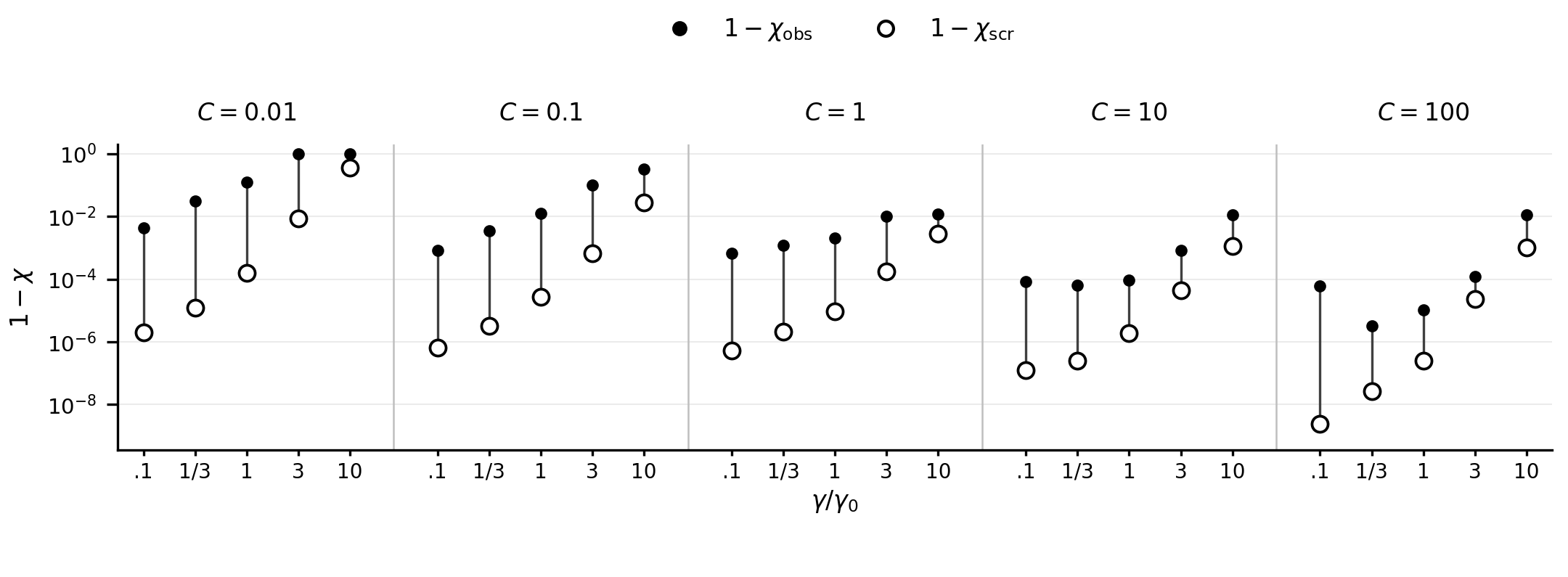}
\caption{RBF one-sweep decrease fractions for diabetes (top), fourclass
(middle), and german.numer (bottom). Filled circles give the observed
search value $1-\chi_{\rm obs}$; open circles give the screened
guarantee $1-\chi_{\rm scr}$. Horizontal ticks give
$\gamma/\gamma_0$ within each labeled $C$ group, and the vertical
scale is logarithmic.}
\label{fig:rbf-app}
\end{figure}

\begin{figure}[t]
\centering
\includegraphics[width=\linewidth]{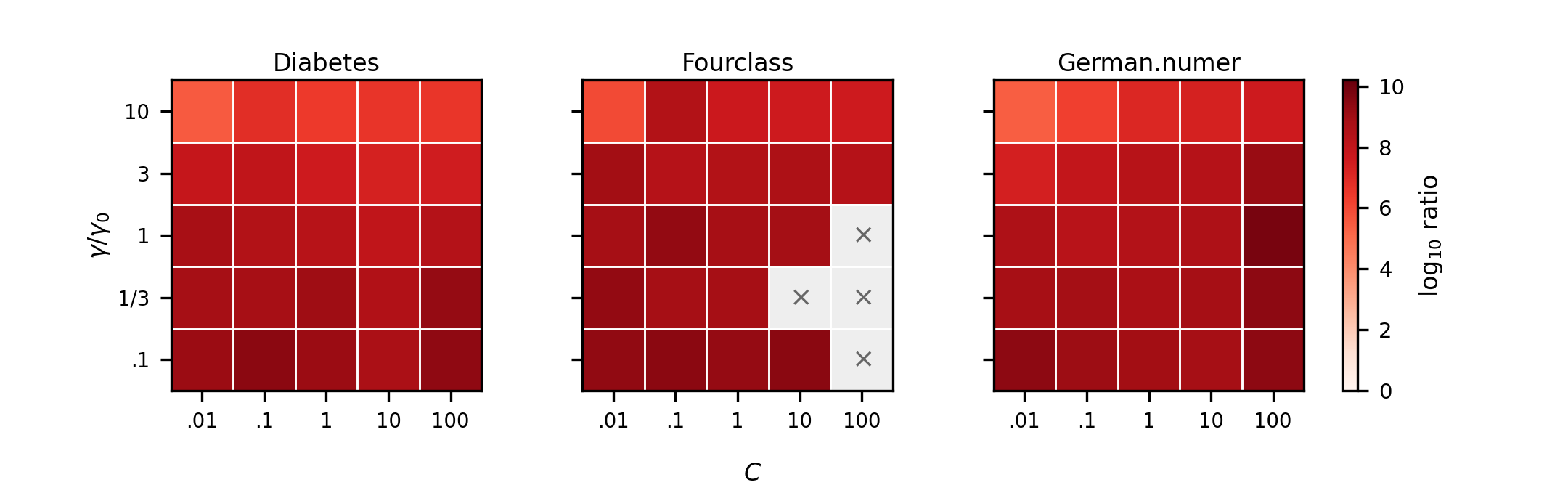}
\caption{RBF certificate improvement on Wang--Lin for diabetes,
fourclass, and german.numer. Each cell shows
$\log_{10}[(1-\chi_{\rm scr})/(1-\chi_{\rm WL})]$;
crosses mark unavailable comparisons.}
\label{fig:heatmaps-app}
\end{figure}

\subsection{Additional linear-kernel results}

Table~\ref{tab:linear-app} gives the remaining linear-SVM values.
The screened matrix passes its numerical test for every diabetes and
fourclass configuration and for german.numer except at $C=100$.
As in the main-text table, the computed screening guarantee is much
stronger than the Wang--Lin value while remaining below the observed
decrease estimate.

\begin{table}[t]
\centering
\small
\begin{tabular}{llrrr}
\toprule
Dataset & $C$ & $1-\chi_{\rm obs}$ & $1-\chi_{\rm scr}$ & $1-\chi_{\rm WL}$ \\
\midrule
Diabetes & $0.01$ & $2.77\times10^{-2}$ & $3.59\times10^{-5}$ & $6.31\times10^{-14}$ \\
 & $0.1$ & $3.09\times10^{-2}$ & $3.50\times10^{-6}$ & $2.87\times10^{-15}$ \\
 & $1$ & $8.91\times10^{-3}$ & $3.49\times10^{-7}$ & $2.09\times10^{-16}$ \\
 & $10$ & $9.37\times10^{-3}$ & $6.26\times10^{-8}$ & $2.44\times10^{-17}$ \\
 & $100$ & $2.16\times10^{-2}$ & $7.00\times10^{-9}$ & $2.34\times10^{-18}$ \\
\midrule
Fourclass & $0.01$ & $3.03\times10^{-1}$ & $1.68\times10^{-4}$ & $1.84\times10^{-13}$ \\
 & $0.1$ & $4.38\times10^{-2}$ & $1.24\times10^{-5}$ & $4.94\times10^{-15}$ \\
 & $1$ & $3.98\times10^{-2}$ & $2.66\times10^{-6}$ & $2.97\times10^{-15}$ \\
 & $10$ & $5.66\times10^{-3}$ & $3.32\times10^{-7}$ & $4.07\times10^{-16}$ \\
 & $100$ & $8.92\times10^{-3}$ & $3.31\times10^{-8}$ & $4.06\times10^{-17}$ \\
\midrule
German.numer & $0.01$ & $1.02\times10^{-2}$ & $1.97\times10^{-6}$ & $3.53\times10^{-15}$ \\
 & $0.1$ & $1.23\times10^{-2}$ & $3.08\times10^{-7}$ & $4.20\times10^{-16}$ \\
 & $1$ & $7.77\times10^{-4}$ & $6.80\times10^{-10}$ & $1.73\times10^{-19}$ \\
 & $10$ & $1.92\times10^{-4}$ & $3.76\times10^{-10}$ & $2.68\times10^{-19}$ \\
 & $100$ & $1.26\times10^{-2}$ & -- & -- \\
\bottomrule
\end{tabular}
\caption{Linear-SVM decrease fractions for the three appendix datasets.
Notation and the meaning of a dash are as in Table~\ref{tab:linear-main}.}
\label{tab:linear-app}
\end{table}

\subsection{Coordinate permutations}

For the permutation study in Figure~\ref{fig:permutations}, we hold
the sonar data and each of the three $(C,\gamma)$ or linear-kernel
settings fixed. We draw 100 permutations with seeds 42 through 141
and permute $H$, $a$, the box widths, and the reference solution
together before recomputing both estimates. The first RBF case uses
the large box $C=100$: a numerical box-constrained fit has 204 free
coordinates and four at zero using a $10^{-6}$ endpoint tolerance.
The second RBF case uses $C=1$ and has
substantially more active box constraints. Both Hessians are
positive definite, and both optima lie on the boundary. The linear
case has a singular Hessian. The gray lines in the figure pair
results from the same permutation, while its bars summarize the
middle 50\% of the 100 values. The plotted spread reflects order
sensitivity, not uncertainty from repeated data sampling.

\end{document}